\newcommand{\SSS} {\mathcal S}
\newcommand{\TT} {\mathcal T}
\newcommand{\Aut}{{{\operatorname{Aut}}}}
\newcommand{\Irr}{{{\operatorname{Irr}}}}
\newcommand{\GL}{\operatorname{GL}}
\newcommand{\SL}{\operatorname{SL}}
\newcommand{\Cl}{\operatorname{Cl}}
\newcommand{\anz}{\operatorname{anz}}
\newcommand{\Lin}{\operatorname{Lin}}
\newcommand{\PSL}{\operatorname{PSL}}
\newtheorem{thm}{Theorem}[section]
\newtheorem{lem}[thm]{Lemma}
\newtheorem{pro}[thm]{Proposition}
\newtheorem{cor}[thm]{Corollary}
\newtheorem{que}[thm]{Question}
\newtheorem*{thmA}{Theorem A}
\newtheorem*{conA'}{Conjecture A'}
\newtheorem*{thmB}{Theorem B}
\theoremstyle{definition}
\numberwithin{equation}{section}
\begin{document}

\title[Groups with a small average number of zeros  in the character table]{Groups with a small average number of zeros \\ in the character table}

\author{Alexander Moret\'o}
\address{Departamento de Matem\'aticas, Universidad de Valencia}
\email{alexander.moreto@uv.es}

\thanks{Research  supported by Ministerio de Ciencia e Innovaci\'on PID-2019-103854GB-100, FEDER funds  and Generalitat Valenciana AICO/2020/298. This work was done while the author was visiting the University of the Basque Country. He thanks the Mathematics Department for its hospitality.}

\keywords{irreducible character,  zero of a character, supersolvable group}

\subjclass[2010]{Primary 20C15}

\date{\today}

\begin{abstract} We classify finite groups with a small average number of zeros in the character table.
\end{abstract}

\maketitle


\section{Introduction}

There has been a recent interest in the study of the probability that a random character value is zero. In \cite{mil}, it was shown that if $\chi$ is a random irreducible character of the symmetric group $S_n$ and $g$ is a random element in $S_n$, then the probability that $\chi(g)=0$ goes to $1$ when $n\to\infty$. If $q$ is a fixed prime power, the analogous statement for $\GL(n,q)$ was proved in \cite{glm}.

Even more recently, two related concepts have been introduced. On the one hand, M. Larsen and A. Miller defined in \cite{lm} the sparsity of a finite group $G$ to be the fraction of non-zero entries in the character table of $G$:
$$
\Sigma(G)=\frac{|\{(g^G,\chi)\in\Cl(G)\times\Irr(G)\mid\chi(g)\neq0\}}
{k(G)|\Irr(G)|},
$$
where $\Cl(G)$ is the set of conjugacy classes of $G$, $\Irr(G)$ is the set of irreducible characters of $G$ and $k(G)$ is the number of conjugacy classes of $G$. For our purposes, it will be more convenient to consider $\Sigma'(G)=1-\Sigma(G)$. With this notation, it was proved in \cite{lm} that if $G_n$ is a family of simple groups of Lie type with rank tending to infinity, then $\Sigma'(G_n)\to1$ when $n\to\infty$. In other words, the probability that a random entry in the character table of $G_n$ is $0$ also goes to $1$ when $n\to\infty$.

On the other hand, S. Madanha studied in \cite{mad}  the average number of zeros in a row (or, equivalently, column) of the character table of $G$:
$$
\anz(G)=\frac{|\{(g^G,\chi)\in\Cl(G)\times\Irr(G)\mid\chi(g)=0\}}
{|\Irr(G)|}.
$$
Note that $\Sigma'(G)=\anz(G)/k(G)$.

It was proved in Theorem B of \cite{mad} that if $\anz(G)<1$ then $G$ is solvable. Note that $\anz(A_5)=1$, so the hypothesis in this theorem cannot be weakened. Our first main result, which does not rely on \cite{mad}, provides a classification of these groups.

\begin{thmA}
Let $G$ be a nonabelian finite group. Then $\anz(G)<1$ if and only if $G$ is one of the following groups:
\begin{enumerate}
\item
A Frobenius group with complement of order $2$ and odd order abelian kernel.
\item
A Frobenius group with cyclic complement of order $p^n-1$ and elementary abelian kernel of order $p^n$ for some prime $p$ and positive integer $n$.
\item
The Frobenius group of order $21$.
\item
The dihedral group of order $12$ or $C_3:C_4$.
\item
An extraspecial $2$-group.
\item
The symmetric group $S_4$.
\end{enumerate}
\end{thmA}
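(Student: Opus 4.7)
For the ``if'' direction I would verify $\anz(G) < 1$ directly from the character tables. In (i), the generalized dihedral group $A \rtimes C_2$ has $(|A|+3)/2$ classes, two linear characters and $(|A|-1)/2$ characters of degree $2$, each vanishing on the single class of involutions, so $\anz(G)=(|A|-1)/(|A|+3)$. In (ii) the group has $|V|-1$ linear characters and one nonlinear character of degree $|V|-1$ vanishing on the $|V|-2$ nonidentity complement classes, giving $\anz(G)=(|V|-2)/|V|$. The remaining families (iii)--(vi) admit analogous explicit computations.

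For the converse, let $G$ be nonabelian with $\anz(G) < 1$. Set $m=|G/G'|$, $s=k(G)-m$ and $z(\chi)=|\{C\in\Cl(G):\chi(C)=0\}|$. Since linear characters have no zeros and each nonlinear $\chi$ has $z(\chi) \geq 1$ by Burnside, the hypothesis is equivalent to the master inequality
\[
\sum_{\chi \in \Irr(G),\ \chi(1)>1}\bigl(z(\chi)-1\bigr) \; < \; m. \quad (\star)
\]
I would split on $s$. For $s=1$, I invoke Seitz's classification of groups with a unique nonlinear irreducible character: $G$ is either an extraspecial $p$-group or the Frobenius group $V\rtimes C_{|V|-1}$ with elementary abelian $V$. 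The extraspecial branch forces $p=2$ (for $p\geq 3$ there are $p-1\geq 2$ nonlinear characters, so $s\geq 2$), yielding family (v); the Frobenius branch directly yields (ii).

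For $s\geq 2$, $(\star)$ is substantially more restrictive. I would first rule out extraspecial $p$-groups with $p\geq 3$: each of the $p-1$ nonlinear characters vanishes on all $p^{2n}-1$ non-central classes, so the left-hand side of $(\star)$ is $(p-1)(p^{2n}-2)$, exceeding $m=p^{2n}$. To dispatch nonsolvable $G$ without invoking Madanha's Theorem~B in \cite{mad} (as the paper claims independence), I would argue that any nonabelian simple composition factor produces enough forced zeros to violate $(\star)$. For solvable $G$ with $s\geq 2$, I would take a minimal normal subgroup $N$ (elementary abelian) and apply Clifford theory: $\Irr(G)$ splits into characters lifted from $G/N$ and characters induced from stabilizers of nontrivial characters of $N$. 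Balancing both contributions against the budget $m$ forces $G$ to be an abelian-kernel Frobenius group with complement of order $2$ (family (i)), the exceptional Frobenius group of order $21$ (family (iii)), or one of the small sporadic examples $D_{12}$, $C_3\rtimes C_4$, $S_4$ in families (iv) and (vi), the last of which are identified by direct inspection of small groups. The main obstacle will be this solvable $s\geq 2$ analysis: simultaneously controlling the zeros contributed by nonlinear characters of potentially different degrees, and excluding both Frobenius complements beyond $C_2$, $C_{|V|-1}$, and the exceptional $C_3$ on $C_7$, as well as non-Frobenius solvable groups beyond the three sporadic examples.
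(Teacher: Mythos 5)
Your ``if'' direction and the reformulation $(\star)$ are fine, and the $s=1$ case via Seitz's classification of groups with a unique nonlinear irreducible character is a legitimate (and slightly different) way to capture families (ii) and (v). But the converse for $s\geq 2$ is where essentially all of the difficulty lives, and there your proposal contains only statements of intent, not arguments. Two concrete gaps. First, the nonsolvable case: the claim that ``any nonabelian simple composition factor produces enough forced zeros to violate $(\star)$'' does not follow from anything you write, and it is delicate precisely because $\anz(A_5)=1$ sits exactly on the boundary; moreover, zeros in the character table of a composition factor do not automatically yield zeros for $G$ without extension or induction arguments --- this is exactly why Madanha's Theorem B needs a CFSG-dependent extendibility theorem. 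The paper does not bypass this either: it eliminates nonsolvable groups through Chillag's Proposition 2.7 (the $m(G)=1$ case), Theorem H of Moret\'o--Sangroniz (the $m(G)=2$ case), Navarro's result that a character whose restriction to a normal subgroup $N$ is reducible vanishes on some coset of $N$, and Guralnick--Navarro's theorem that a coset $xG''$ forming a single conjugacy class forces solvability. You would need to supply comparable inputs.

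Second, the solvable $s\geq 2$ case: ``balancing both contributions against the budget $m$'' via a minimal normal subgroup is not a proof strategy that obviously terminates, and you yourself flag it as the main obstacle. The paper's route is quite specific and none of its key ideas appear in your sketch: Wolf's lemma produces, for each $\varphi\in\Irr(G')$, a subgroup $G'\leq U\leq G$ such that the $|U:G'|$ characters over $\varphi$ all vanish on $G-U$; the inequality $\anz(G)<1$ then forces $U\in\{G',G\}$ (Lemma \ref{pro}), which for metabelian $G$ forces every nonlinear character to vanish on $G-G'$, i.e.\ $G$ is a Camina group (Lemma \ref{char}), and Dark--Scoppola's classification of Camina groups then reduces everything to $p$-groups and Frobenius groups with cyclic or quaternion complement. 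The reduction of the general solvable case to the metabelian case (Theorem \ref{first}) further needs Qian's classification of solvable groups with a character vanishing on one class. Without the Wolf-subgroup/Camina mechanism, or a genuine substitute for it, your counting framework alone does not exclude, say, non-Frobenius solvable groups or Frobenius groups with larger complements, so the proposal as written has a real gap.
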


The proof of Theorem B of \cite{mad} relies on the classification of finite simple groups by means, among other results, of a theorem on extendibility of characters of a simple group $S$ to $\Aut(S)$, for which it seems unlikely that one can find a classification-free proof (Theorem A of \cite{mad}). Our proof of Theorem A also depends on the CFSG but, as we will discuss at the end of Section 3, it is conceivable that one could, perhaps, find arguments that avoid using the classification.

Conjecture 1 of  \cite{mad} asserts that if $G$ has odd order and $\anz(G)<16/11$ then $G$ is supersolvable. (Note that the average number of zeros of the Frobenius group of order $75$ is $16/11$ so this number cannot be increased.) We prove that, in fact, there are exactly $4$ nonabelian groups with average number of zeros less than $16/11$.

\begin{thmB}
Let $G$ be a nonabelian group of odd order. Then $\anz(G)<16/11$ if and only if $G$ is the Frobenius group of order $3\cdot7, 3\cdot13, 3\cdot19$ or $5\cdot11$.
\end{thmB}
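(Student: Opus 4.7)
The plan is to verify by direct computation that the four listed groups satisfy $\anz(G)<16/11$, and then to argue the converse by splitting into Frobenius and non-Frobenius cases, with the latter being the main obstacle.

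For the forward direction, each of the four candidates is a Frobenius group $G=K\rtimes C$ with elementary abelian kernel $K\cong C_p$ and cyclic complement $C\cong C_q$ of prime orders with $q\mid p-1$. The irreducible characters consist of $q$ linear characters inflated from $G/K\cong C_q$ (none with zeros), together with $(p-1)/q$ characters of degree $q$ induced from nontrivial linear characters of $K$; these latter vanish exactly on the $q-1$ conjugacy classes outside $K$, since the Gauss-sum values on $K$ are nonzero for each of these four pairs. Consequently
\[
\anz(G)=\frac{(p-1)(q-1)}{q^2+p-1},
\]
which evaluates to $4/5,\,8/7,\,4/3,\,8/7$ for $(p,q)=(7,3),(13,3),(19,3),(11,5)$ respectively, each strictly less than $16/11$.

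For the converse, let $G$ be nonabelian of odd order with $\anz(G)<16/11$. By Feit--Thompson $G$ is solvable, and Theorem A (whose only odd-order nonabelian member is $F_{21}$) handles the subcase $\anz(G)<1$, so we may assume $1\leq \anz(G)<16/11$. In the Frobenius case with abelian kernel of order $n$ and cyclic complement of order $m$, the same analysis yields the lower bound $\anz(G)\geq (n-1)(m-1)/(m^{2}+n-1)$; solving this strictly less than $16/11$ over odd prime-power $n$ and odd $m\mid n-1$ leaves precisely the four pairs listed, with the boundary cases $(n,m)=(25,3)$ and $(19,9)$ yielding $\anz=16/11$ exactly (the groups $F_{75}$ and $F_{171}$) and thus excluded. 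Noncyclic odd complements (the smallest being $F_{21}$ acting on a finite field) and nonabelian Frobenius kernels contribute additional character degrees with further zero columns, and a short finite check rules these out.

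The hard part will be the non-Frobenius case. If $G$ is nilpotent (an odd $p$-group), then extraspecial groups of order $p^{1+2n}$ satisfy $\anz(G)=(p-1)(p^{2n}-1)/(p^{2n}+p-1)$, which equals $16/11$ only for the two nonabelian groups of order $27$ and exceeds $16/11$ in all other cases; non-extraspecial odd $p$-groups can be handled analogously via their nonlinear character-degree structure. For non-nilpotent non-Frobenius $G$, I would exploit the Fitting subgroup $F=F(G)$, which satisfies $1\neq F\lneq G$ and $C_G(F)\leq F$ by solvability: characters induced from a normal subgroup contained in $F$ vanish outside that subgroup, while lifted characters from $G/F$ contribute further zero columns when $G/F$ is itself nonabelian. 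Combining these contributions should give a structural lower bound $\anz(G)\geq 16/11$, and in practice one likely reduces via such estimates to a finite list of small-order candidates (direct products like $C_p\times F_{21}$, semidirect extensions of $F_{21}$, and Heisenberg-type $p$-group extensions), each of which is checked explicitly. The bookkeeping to uniformly handle the non-Frobenius case is the principal technical difficulty, and likely invokes classical results on the character-degree structure of solvable groups (Isaacs--Passman, Huppert).
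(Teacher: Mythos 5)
Your forward direction is fine, and your analysis of the Frobenius-with-abelian-kernel case (including the boundary cases $F_{75}$ and $(19,9)$, and the extraspecial groups of order $27$ hitting $16/11$ exactly) matches the numerics in the paper. But the converse has a genuine gap: the case of a group that is neither nilpotent nor Frobenius is exactly where you stop proving and start describing a strategy (``should give a structural lower bound'', ``likely reduces to a finite list'', ``likely invokes Isaacs--Passman''). Nothing in your sketch actually forces $\anz(G)\geq 16/11$ for, say, a solvable odd-order group with two nontrivial Fitting factors; the observation that characters induced from subgroups of $F(G)$ vanish outside and that $G/F(G)$ may contribute zeros does not by itself yield a quantitative bound, and no finite list of candidates is ever produced or checked. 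The same objection applies, less severely, to your one-line dismissals of non-extraspecial odd $p$-groups and of Frobenius groups with nonabelian kernel or noncyclic complement.

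The paper closes these cases with machinery you do not have. The key tool is a Wolf-subgroup lemma (Lemma \ref{pro2}): for an odd-order group with $\anz(G)<12/5$, every $\varphi\in\Irr(G')$ has Wolf subgroup equal to $G'$ or $G$; pairing each vanishing class with its inverse class (Lemma \ref{2}) is what makes the resulting averaging estimate work at the threshold $12/5$. This immediately shows that every metabelian odd-order group with $\anz(G)<12/5$ is a Camina group (Corollary \ref{meta2}), so Dark--Scoppola reduces the metabelian case to Camina $p$-groups (excluded by Lemma \ref{pgp}) and Frobenius groups with cyclic complement, where a count of nonlinear characters pins down the four groups. The general case is then handled by taking a minimal non-metabelian counterexample $G$: by Lemma \ref{quo} the quotient $G/G''$ is one of the four Frobenius groups, $G''$ is minimal normal, and the Wolf-subgroup dichotomy applied to nonlinear characters of $G'$ yields a contradiction in both branches. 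If you want to complete your proposal you essentially need to reconstruct this reduction (or an equivalent one); the direct Fitting-subgroup bookkeeping you propose is not carried out and it is not clear it can be made to work without something like the Camina/Wolf structure.
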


 \section{Preliminary results}

In this section, we collect several  results that will be used often.
 If $G$ is a finite group and $\SSS\subseteq\Irr(G)$ we write $\anz(\SSS)$ to denote the average number of zeros of the characters in $\SSS$. Note that $\anz(\Irr(G))=\anz(G)$. Given $\chi\in\Irr(G)$, we will write $m(\chi)$ to denote the number of conjugacy classes $g^G$ of $G$ such that $\chi(g)=0$. We will use frequently the following elementary result, often without explicit mention.

 \begin{lem}
 \label{ave}
 Let $G$ be a finite group. Suppose that $\SSS\subseteq\Irr(G)$ contains all the characters $\chi\in\Irr(G)$ with $m(\chi)<\anz(G)$. Then $\anz(\SSS)\leq\anz(G)$. In particular, if $\anz(G)<1$ and $\SSS\subseteq\Irr(G)$ contains all the linear characters of $G$, then $\anz(\SSS)\leq\anz(G)$. It follows that if $N\trianglelefteq G$ is contained in $G'$, then $\anz(G/N)\leq\anz(G)$.
 \end{lem}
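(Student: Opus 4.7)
The plan is to establish the three assertions in sequence, each building on the previous. For the main inequality, set $a=\anz(G)$ and let $\TT=\Irr(G)\setminus\SSS$. By hypothesis every $\chi\in\TT$ satisfies $m(\chi)\geq a$, so the identity $\sum_{\chi\in\Irr(G)}m(\chi)=|\Irr(G)|\,a$ rearranges as
\[
\sum_{\chi\in\SSS}m(\chi)\;=\;|\Irr(G)|\,a\;-\;\sum_{\chi\in\TT}m(\chi)\;\leq\;|\Irr(G)|\,a\;-\;|\TT|\,a\;=\;|\SSS|\,a,
\]
and dividing by $|\SSS|$ yields $\anz(\SSS)\leq a$, as desired.

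For the \emph{in particular} clause I would invoke Burnside's classical theorem that every nonlinear irreducible character of a finite group vanishes on at least one conjugacy class; equivalently, $m(\chi)=0$ forces $\chi\in\Lin(G)$. When $\anz(G)<1$, any $\chi$ with $m(\chi)<\anz(G)<1$ must satisfy $m(\chi)=0$ and hence be linear, so a subset $\SSS$ containing $\Lin(G)$ automatically contains every $\chi$ with $m(\chi)<\anz(G)$, and the first statement applies.

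For the quotient consequence, take $\SSS=\{\chi\in\Irr(G):N\subseteq\ker\chi\}$, which via inflation is in natural bijection with $\Irr(G/N)$. Since $N\subseteq G'$, every linear character of $G$ has $N$ in its kernel, so $\SSS\supseteq\Lin(G)$; applying the previous clause gives $\anz(\SSS)\leq\anz(G)$. It remains to show $\anz(G/N)\leq\anz(\SSS)$: for $\chi\in\SSS$ inflated from $\bar{\chi}\in\Irr(G/N)$, every $G/N$-class $\bar{K}$ on which $\bar{\chi}$ vanishes lifts to one or more full $G$-classes on which $\chi$ vanishes, whence $m_G(\chi)\geq m_{G/N}(\bar{\chi})$; summing over $\SSS$ and dividing by $|\SSS|=|\Irr(G/N)|$ gives $\anz(G/N)\leq\anz(\SSS)\leq\anz(G)$.

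The only real subtlety is the class bookkeeping in the final step: one must check that the preimage of each $G/N$-class under the quotient map decomposes into complete $G$-classes, so that the inflated character inherits its zero on $\bar{K}$ at every $G$-class above $\bar{K}$. Once this is in hand, everything reduces to elementary averaging together with Burnside's theorem on zeros, and no deeper input is required.
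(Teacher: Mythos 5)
Your proof of the main inequality is the same averaging computation as the paper's (rearranging $\sum_{\chi\in\Irr(G)}m(\chi)=|\Irr(G)|\anz(G)$ and bounding the contribution of $\Irr(G)\setminus\SSS$ from below), and it is correct. Your treatment of the \emph{in particular} and quotient clauses—Burnside's vanishing theorem plus the observation that $m_G(\chi)\geq m_{G/N}(\bar{\chi})$ under inflation—correctly supplies the details the paper leaves implicit, so this is essentially the paper's argument, carefully completed.
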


 \begin{proof}
 Note that
 $$
 \anz(G)=\frac{\sum_{\chi\in\SSS}m(\chi)+\sum_{\chi\not\in\SSS}m(\chi)}
 {|\Irr(G)|}\geq
 \frac{\sum_{\chi\in\SSS}m(\chi)+|\Irr(G)-\SSS|\anz(G)}
 {|\Irr(G)|}.
 $$
 Therefore,
 $$
 \frac{|\SSS|}{|\Irr(G)|}\anz(G)\geq\frac{\sum_{\chi\in\SSS}m(\chi)}{|\Irr(G)|},
 $$
 so
 $$\anz(G)\geq\frac{\sum_{\chi\in\SSS}m(\chi)}{|\SSS|}=\anz(\SSS),$$
 as desired.
 \end{proof}

  We will use the following results on odd order groups.

  \begin{lem}
  \label{2}
  Let $G$ be an odd order group. If $\chi\in\Irr(G)$ is not linear, then $m(\chi)$ is a positive even integer.
  \end{lem}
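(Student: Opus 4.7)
The plan is to combine two classical facts: Burnside's theorem on zeros of nonlinear characters (for positivity) and the pairing of conjugacy classes by inversion (for evenness). Both are well-known ingredients, so I do not anticipate a real obstacle; the only thing to check is that in odd order groups the inversion pairing is free on non-identity classes.

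\medskip

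\noindent\textbf{Step 1 (positivity).} Invoke Burnside's classical theorem: if $\chi\in\Irr(G)$ is not linear, then $\chi(g)=0$ for some $g\in G$. Hence the conjugacy class $g^G$ contributes to $m(\chi)$, so $m(\chi)\geq 1$.

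\medskip

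\noindent\textbf{Step 2 (evenness).} I will define an involution on the set of conjugacy classes of $G$ by $g^G\mapsto (g^{-1})^G$. Since $|G|$ is odd, no non-identity element satisfies $g=g^{-1}$ (as $g^2=1$ would force $|g|\mid\gcd(2,|G|)=1$), and it is easy to see that $g^G=(g^{-1})^G$ would force $g$ to be conjugate to $g^{-1}$; a standard argument in odd order groups shows the only class fixed by this involution is $\{1\}$. Therefore the non-identity classes are partitioned into genuine pairs $\{g^G,(g^{-1})^G\}$ with $g^G\neq (g^{-1})^G$.

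\medskip

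\noindent\textbf{Step 3 (combine).} Since $\chi(g^{-1})=\overline{\chi(g)}$, the character $\chi$ vanishes on $g^G$ if and only if it vanishes on $(g^{-1})^G$. Moreover $\chi(1)=\chi(1)\neq 0$, so the identity class does not contribute to $m(\chi)$. Consequently the classes where $\chi$ vanishes are grouped into the above pairs, and $m(\chi)$ is even. Combined with Step~1, $m(\chi)$ is a positive even integer.

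\medskip

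The only subtle point is the claim that in an odd order group no non-identity class is inverse-invariant. This is standard (it is the reason odd order groups have no nontrivial real characters), but I would record it explicitly rather than rely on the reader; the rest is bookkeeping.
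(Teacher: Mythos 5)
Your proposal is correct and follows essentially the same route as the paper: Burnside's theorem gives $m(\chi)>0$, and the pairing $g^G\leftrightarrow(g^{-1})^G$, which is fixed-point-free on classes where $\chi$ vanishes because no nontrivial element of an odd order group is conjugate to its inverse, gives evenness. The paper's proof is exactly this, stated more tersely.
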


\begin{proof}
We know that $m(\chi)>0$ by Burnside's theorem.
Suppose that $\chi(x)=0$. Since $|G|$ is odd, $x$ and $x^{-1}$ are not conjugate. Furthermore, $\chi(x^{-1})=\overline{\chi(x)}=0$. The result follows.
\end{proof}

  \begin{lem}
  \label{quo}
  Let $G$ be an odd order group. Suppose that $\anz(G)<2$. Let $N\trianglelefteq G$ such that $N\leq G'$. Then $\anz(G/N)\leq\anz(G)$.
  \end{lem}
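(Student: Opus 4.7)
The plan is to apply Lemma~\ref{ave} to the subset $\SSS = \Irr(G/N) \subseteq \Irr(G)$, where characters of $G/N$ are identified with the characters of $G$ containing $N$ in their kernel, and then show that zero-counts can only decrease when one passes from $G$ to $G/N$.

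For the first ingredient, I would argue that $\SSS$ captures all characters with few zeros. Since $|G|$ is odd, Lemma~\ref{2} tells us that every nonlinear $\chi \in \Irr(G)$ satisfies $m(\chi) \geq 2$. As $\anz(G) < 2$, any $\chi \in \Irr(G)$ with $m(\chi) < \anz(G)$ must therefore be linear. The hypothesis $N \leq G'$ then forces every linear character of $G$ to have $N$ in its kernel, so all such $\chi$ lie in $\Irr(G/N) = \SSS$. Lemma~\ref{ave} therefore yields
$$\anz(\SSS) \;=\; \frac{\sum_{\chi \in \Irr(G/N)} m(\chi)}{|\Irr(G/N)|} \;\leq\; \anz(G),$$
where $m(\chi)$ still denotes the number of zero $G$-conjugacy classes.

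For the second ingredient, I would observe that for $\chi \in \Irr(G/N)$ the zero set of $\chi$ in $G$ is a union of cosets of $N$, namely the preimage of its zero set in $G/N$; hence each $G/N$-class on which $\chi$ vanishes pulls back to a union of one or more $G$-classes on which $\chi$ vanishes. Writing $m_{G/N}(\chi)$ for the count of zero classes in $G/N$, this gives $m_{G/N}(\chi) \leq m(\chi)$ for every $\chi \in \SSS$, and averaging over $\SSS$ produces $\anz(G/N) \leq \anz(\SSS) \leq \anz(G)$. The only real content is the first ingredient: the odd-order hypothesis, via Lemma~\ref{2}, promotes the effective threshold in Lemma~\ref{ave} from $1$ to $2$, which is exactly what the present bound $\anz(G) < 2$ requires; the second ingredient is a routine bookkeeping comparison of conjugacy classes in $G$ and in $G/N$.
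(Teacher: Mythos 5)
Your proof is correct and follows the same route as the paper: apply Lemma~\ref{ave} with $\SSS=\Irr(G/N)$, using Lemma~\ref{2} and $\anz(G)<2$ to see that every character with few zeros is linear and hence lies in $\Irr(G/N)$. You also spell out the comparison $m_{G/N}(\chi)\leq m_G(\chi)$, which the paper leaves implicit; this is a welcome clarification but not a different argument.
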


\begin{proof}
Since $N\leq G'$, $\Irr(G/N)$ contains all the members of $\Irr(G)$ that vanish on at most  $\anz(G)$ classes, by Lemma \ref{2}. Now, the result follows from Lemma \ref{ave}
\end{proof}

Recall that a group $G$ is a Camina group if $gG'$ is a conjugacy class for every $g\in G-G'$. We will use the following result on Camina groups \cite{ds}.

\begin{thm}[Dark-Scoppola]
\label{ds}
Let $G$ be a Camina group. Then one of the following holds:
\begin{enumerate}
\item
$G$ is a $p$-group, for some prime $p$, of nilpotence class at most $3$.
\item
$G$ is a Frobenius group whose complement is  cyclic.
\item
$G$ is a Frobenius group whose complement is quaternion of order $8$.
\end{enumerate}
\end{thm}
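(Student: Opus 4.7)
My plan is to translate the Camina condition into both class-sum and commutator form, then bifurcate into the $p$-group versus non-$p$-group cases. The starting observation (Step 1) is that for $g \in G \setminus G'$, the identity $g^G = gG'$ gives $|C_G(g)| = |G:G'|$ and, more usefully, that $x\,C_G(g) \mapsto [g,x]$ is a bijection $G/C_G(g) \to G'$. Dually, every $\chi \in \Irr(G)$ with $G' \not\le \ker\chi$ vanishes on $G \setminus G'$: on such a $\chi$ the operator $\sum_{y \in g^G}\rho(y) = \rho(g)\sum_{x \in G'}\rho(x)$ is zero because $\sum_{x \in G'}\rho(x)$ projects onto the $G'$-fixed subspace, which is a $G$-submodule and hence trivial by irreducibility.

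Assume next that $G$ is not a $p$-group (Step 2). I would argue $\gcd(|G'|,|G:G'|) = 1$ by applying the commutator bijection to a $p$-element outside $G'$ when a prime $p$ divides both orders, extracting a divisibility contradiction. Then $C_G(g) \cap G'$ has order dividing both $|G:G'|$ and $|G'|$, hence equals $1$, so $G$ is Frobenius with kernel $G'$. Writing $G = N \rtimes H$ with $N$ the kernel, one has $N \le G'$ and $G' = NH'$, so the Camina condition on $h \in H \setminus H'$ translates to $|C_H(h)| = |H:H'|$ (Step 3). Combined with Zassenhaus's classification of Frobenius complements (Sylow subgroups cyclic or generalized quaternion), a case check forces $H$ to be cyclic or isomorphic to $Q_8$: larger generalized quaternion Sylows and binary polyhedral factors such as $\SL(2,3)$ yield elements outside $H'$ with too large a centralizer. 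Finally, for $G$ a Camina $p$-group (Step 4), the target is $\gamma_4(G) = 1$: the bijectivity of $[g,\cdot]\colon G \to G'$ for $g \notin G'$, combined with Hall--Witt and the standard commutator identities applied to elements of the lower central series, should force $[\gamma_3(G),G] = 1$.

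The main obstacle is the $p$-group case: the nilpotence-class bound requires delicate commutator calculus with no obvious conceptual shortcut, and is the substantive contribution of Dark and Scoppola. Step 3 is less conceptual but requires patience with the Zassenhaus structure theorem, and Step 2 is essentially routine once the character vanishing of Step 1 is in hand.
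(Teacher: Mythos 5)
The paper does not actually prove this statement: it is imported as a known classification (Dark--Scoppola, building on Camina and on Chillag--Macdonald), so there is no internal proof to compare against, and your attempt must be judged on its own. It contains a genuine error in Step 2. The claim that $\gcd(|G'|,|G{:}G'|)=1$ whenever a Camina group is not a $p$-group is false, and it is refuted by case (iii) of the very theorem you are proving: the Frobenius group $G=Q_8\ltimes(C_3\times C_3)$, with $Q_8$ acting fixed-point-freely, is a Camina group with $|G'|=18$ and $|G{:}G'|=4$, so $2$ divides both. For the same group your next conclusion also fails: $G$ is Frobenius, but its kernel $C_3\times C_3$ has index $2$ in $G'$, so the kernel is not $G'$. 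Since your non-$p$-group analysis funnels everything into ``Frobenius with kernel $G'$'' (whose complement is then $G/G'$, abelian, hence cyclic), the argument structurally cannot produce alternative (iii); indeed Step 3's appeal to Zassenhaus to rule out $\SL(2,3)$-type complements is inconsistent with Step 2, which would already have forced the complement abelian. The correct intermediate result is Camina's trichotomy: for a Camina pair $(G,N)$, either $G$ is Frobenius with kernel $N$, or $N$ is a $p$-group, or $G/N$ is a $p$-group; the two non-Frobenius branches then need separate, nontrivial treatment before one lands on (ii) and (iii).

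Step 4 is also not a proof. The bound $\gamma_4(G)=1$ for Camina $p$-groups is the main theorem of \cite{ds}; it is not known to follow from the bijectivity of $x\mapsto[g,x]$ together with Hall--Witt and ``standard commutator identities'' in any short way (Macdonald obtained only much weaker class bounds by exactly such calculus, and the class-at-most-$3$ proof is long and delicate). Your Step 1 is correct, and the skeleton of Step 3 (that $C_G(h)=C_H(h)$ for $1\neq h\in H$, so the complement is itself abelian or a Camina Frobenius complement, which Zassenhaus's restrictions force to be cyclic or $Q_8$) is sound once one has genuinely reached the Frobenius case; but as written the proposal misses one of the three conclusions and leaves the hardest one unproved.
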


We will also use the following characterization of Camina groups in terms of zeros of characters.

\begin{lem}
\label{char}
Let $G$ be a finite group. Then $G$ is a Camina group if and only of $\chi(x)=0$ for every nonlinear $\chi\in\Irr(G)$ and every $x\in G-G'$.
\end{lem}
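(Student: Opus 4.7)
\medskip

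\noindent\textbf{Proof plan.} The plan has two halves, linked by one key identity.

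\smallskip

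The first half establishes the following key fact: for every nonlinear $\chi\in\Irr(G)$ and every $g\in G$,
$$
\sum_{h\in G'}\chi(gh)=0. \qquad (\star)
$$
I would prove $(\star)$ by analysing the central idempotent $e=\frac{1}{|G'|}\sum_{h\in G'}h\in\CC[G]$ (central because $G'\trianglelefteq G$). Since $e$ is a central idempotent, in the irreducible representation affording $\chi$ it acts by a scalar that is $0$ or $1$; it acts as the identity exactly when $G'\le\ker\chi$, i.e.\ when $\chi$ is linear. For nonlinear $\chi$ it acts as $0$, so $\chi(ge)=0$, which is precisely $(\star)$. (Alternatively, one can view the left side of $(\star)$ as a class function on $G/G'$, expand it in the linear characters of $G$, and observe via Frobenius reciprocity that each Fourier coefficient equals $|G'|\cdot[\chi,\lambda]_G=0$ for $\lambda$ linear and $\chi$ nonlinear.)

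\smallskip

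For the forward direction, suppose $G$ is a Camina group and let $g\in G-G'$. Then $gG'$ is a single conjugacy class, so $\chi(gh)=\chi(g)$ for all $h\in G'$, and $(\star)$ collapses to $|G'|\chi(g)=0$, yielding $\chi(g)=0$ for every nonlinear $\chi$.

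\smallskip

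For the converse, assume the vanishing hypothesis, fix $g\in G-G'$ and an arbitrary $x\in gG'$ (so also $x\in G-G'$). I would apply the second orthogonality relation to the sum $\sum_{\chi\in\Irr(G)}\chi(g)\overline{\chi(x)}$. The contribution of every linear $\chi$ is $\chi(g)\overline{\chi(x)}=|\chi(g)|^2=1$ because linear characters are trivial on $G'$ and hence constant on $gG'$; there are $|G:G'|$ such characters. Every nonlinear $\chi$ contributes $0$ by hypothesis (applied to either $g$ or $x$). Thus
$$
\sum_{\chi\in\Irr(G)}\chi(g)\overline{\chi(x)}=|G:G'|\neq 0,
$$
and column orthogonality forces $x$ to be conjugate to $g$. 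This proves $gG'\subseteq g^G$; the reverse inclusion is automatic because conjugation fixes cosets of $G'$, so $gG'=g^G$ and $G$ is a Camina group.

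\smallskip

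I do not foresee a serious obstacle: the only nontrivial ingredient is the identity $(\star)$, and both the idempotent argument and the Fourier expansion argument are short and standard. The rest is bookkeeping with the orthogonality relations.
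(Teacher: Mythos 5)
Your proposal is correct and complete. Note that the paper itself does not prove this lemma at all: it simply cites Lemma~3.1 of \cite{lmw}, remarking that the argument given there for Camina $p$-groups works verbatim for arbitrary groups. Your write-up therefore supplies a genuine self-contained proof where the paper outsources one. Both halves check out: the identity $(\star)$ is exactly the statement that the central idempotent $\frac{1}{|G'|}\sum_{h\in G'}h$ kills every irreducible representation whose character is nonlinear (equivalently, $[\chi_{G'},1_{G'}]=0$ precisely when $G'\not\leq\ker\chi$, i.e.\ when $\chi$ is nonlinear), and it immediately gives the forward implication since a Camina group has $\chi$ constant on the class $gG'$. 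For the converse, the column-orthogonality computation is right: the linear characters each contribute $1$ because they are constant on cosets of $G'$ and take values of modulus one, the nonlinear ones contribute $0$ by hypothesis, so the sum equals $|G:G'|\neq 0$ and forces $x\sim g$; combined with the trivial inclusion $g^G\subseteq gG'$ (since $g^{-1}y^{-1}gy=[g,y]\in G'$) this yields $gG'=g^G$. This is the standard argument one would expect to find behind the cited lemma, so while your route differs from the paper's (a citation versus a proof), it is the natural direct one and there is no gap.
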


\begin{proof}
 This is Lemma 3.1 of \cite{lmw}. (We remark that Lemma 3.1 of \cite{lmw} was stated for Camina $p$-groups but exactly the same argument works for arbitrary groups.)
 \end{proof}

Our next result is a consequence of a well-known elementary, but often useful, lemma of T. Wolf. We refer the reader to \cite{isa} for any unexplained notation or concept.

 \begin{lem}
 \label{wolf}
 Let $G$ be a finite group and let $\varphi\in\Irr(G')$. Then there exists a unique subgroup $G'\leq U\leq G$ maximal such that $\varphi$ extends to $U$ and any irreducible character of $G$ lying over $\varphi$ vanishes on $G-U$. Furthermore, there are exactly $|U:G'|$ such characters.
\end{lem}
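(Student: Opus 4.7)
The plan is to combine the Clifford correspondence with the classical form of Wolf's lemma. Let $T=I_G(\varphi)$ denote the inertia subgroup of $\varphi$ in $G$. Inner conjugation by elements of $G'$ fixes every character of $G'$, so $G'\leq T$, and consequently $T/G'$ is a subgroup of the abelian group $G/G'$ and is itself abelian. Clifford's correspondence then gives a bijection between the irreducible characters of $T$ lying over $\varphi$ and those of $G$ lying over $\varphi$, via induction $\psi\mapsto\psi^G$.

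Since $\varphi$ is $T$-invariant and $T/G'$ is abelian, the plan is to apply Wolf's lemma inside $T$ with normal subgroup $G'$: this produces a unique subgroup $G'\leq U\leq T$, maximal with respect to the property that $\varphi$ extends to $U$. Wolf's lemma further asserts that every irreducible character $\psi$ of $T$ lying over $\varphi$ has the form $\psi=\hat{\varphi}^T$ for some extension $\hat{\varphi}$ of $\varphi$ to $U$, and that $\psi$ vanishes on $T-U$. Gallagher's theorem identifies the extensions of $\varphi$ to $U$ with $\Irr(U/G')$, yielding exactly $|U:G'|$ such extensions; composing with the Clifford bijection, there are $|U:G'|$ irreducible characters of $G$ lying over $\varphi$.

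To establish the vanishing statement, let $\chi$ be such a character and write $\chi=\psi^G=\hat{\varphi}^G$ by induction in stages. The key observation is that $U$ is in fact normal in $G$, not merely in $T$: the subgroup $U/G'$ is contained in the abelian quotient $G/G'$ and is therefore normal there, which forces $U\trianglelefteq G$. Consequently, for any $g\notin U$ no $G$-conjugate of $g$ lies in $U$, and the induction formula
\[ \chi(g)=\frac{1}{|U|}\sum_{\substack{x\in G\\ x^{-1}gx\in U}}\hat{\varphi}(x^{-1}gx) \]
reduces to an empty sum, giving $\chi(g)=0$.

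The main step that will be taken as a black box is Wolf's lemma itself, which supplies both the uniqueness of $U$ inside $T$ and the description of the characters of $T$ over $\varphi$ as induced from extensions on $U$; the remainder is a routine combination of Clifford's theorem, Gallagher's theorem, and the induction formula. The only conceptual point to highlight beyond Wolf's lemma is the passage from normality of $U$ in $T$ to normality of $U$ in $G$, which follows immediately once one notes that $U/G'$ lives inside an abelian group.
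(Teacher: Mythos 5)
Your overall route is the same as the paper's: pass to the inertia group $T=I_G(\varphi)$, observe that $(T,G',\varphi)$ is a character triple with $T/G'$ abelian, apply Wolf's lemma there to produce $U$, count via Gallagher, and transfer to $G$ by the Clifford correspondence. However, you misquote the key input. Wolf's Lemma 2.2(b) does \emph{not} say that each $\psi\in\Irr(T)$ lying over $\varphi$ equals $\hat\varphi^T$ for an extension $\hat\varphi$ of $\varphi$ to $U$; it says that each such extension is \emph{fully ramified} with respect to $T/U$, i.e. $\hat\varphi^T=e\psi$ with $\psi\in\Irr(T)$, $\psi_U=e\hat\varphi$ and $e^2=|T:U|$. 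The identity $\psi=\hat\varphi^T$ is false whenever $U<T$: it would give $\psi(1)=|T:U|\varphi(1)$ instead of $|T:U|^{1/2}\varphi(1)$, and since $\hat\varphi$ may be chosen $T$-invariant, $\hat\varphi^T$ is certainly reducible when $U<T$. The same error propagates to your claim $\chi=\hat\varphi^G$, which should read $\hat\varphi^G=e\chi$.

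The error turns out to be harmless for the two conclusions you need, but the repair should be made explicit. The map sending an extension $\hat\varphi$ of $\varphi$ to $U$ to the unique irreducible constituent of $\hat\varphi^T$ is still a bijection onto the set of irreducible characters of $T$ over $\varphi$ (any such $\psi$ restricts to $U$ as a multiple of a single extension, since by Gallagher every irreducible character of $U$ over $\varphi$ is an extension), so the count $|U:G'|$ stands. And since $\hat\varphi^G=e\chi$ with $e>0$, your empty-sum computation shows $\hat\varphi^G$ vanishes on $G-U$ and hence so does $\chi$. Here your observation that $U\trianglelefteq G$ because $U/G'$ lies in the abelian group $G/G'$ is correct and is exactly what makes the induction formula collapse; the paper leaves this point implicit. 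So the argument becomes correct once ``$\psi=\hat\varphi^T$'' is replaced by ``$\hat\varphi$ is fully ramified with respect to $T/U$ and $\psi$ is the unique irreducible constituent of $\hat\varphi^T$''.
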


\begin{proof}
Put $T=I_G(\varphi)$. Note that $(T,G',\varphi)$ is a character triple with $T/G'$ abelian. By Lemma 2.2(a) of \cite{wol}, there exists a unique $U\leq T$ maximal with respect to $\varphi$ having a $T$-invariant extension to $U$. By Gallagher's theorem (Corollary 6.17 of \cite{isa}), there are $|U:G'|$ extensions of $\varphi$ to $T$. By Lemma 2.2(b) of \cite{wol}, all such extensions are fully ramified with respect to $T/U$. This implies that there are exactly $|U:G'|$ irreducible characters of $T$ lying over $\varphi$ and all of them vanish on $T-U$ (by Problem 6.3 of \cite{isa}, for instance). Now, we can use Clifford's correspondence (Theorem 6.11 of \cite{isa}) to deduce the result.
\end{proof}

This lemma will be used frequently, sometimes without further explicit mention.
We will say that $U$ is the {\it Wolf subgroup} associated to $\varphi$.

\section{Proof of Theorem A}

 Next, we start working toward a proof of Theorem A.
The next result will allow us to assume in the proof of Theorem A that $G$ has some irreducible character that vanishes on at least $3$ conjugacy classes. Given a group $G$ we set $m(G)=\max m(\chi)$, where $\chi\in\Irr(G)$. We will write $\Lin(G)$ to denote the set of linear characters of $G$.

\begin{lem}
\label{sma}
Let $G$ be a finite group. Then
\begin{enumerate}
\item
If $m(G)=1$ then $G$ is a Frobenius group with complement of order $2$ and odd order abelian kernel.
\item
If $m(G)=2$ then $G$ is an extension of a group of order $2$ by a Frobenius group with complement of order $2$ and odd order abelian kernel, a Frobenius group with complement of order $3$ and abelian kernel, $S_4$ or $\anz(G)\geq1$.
\end{enumerate}
In particular, if $|G:G'|\leq2$ then $\anz(G)<1$ if and only if $G=S_4$ or $G$ is a Frobenius group with complement of order $2$ and odd order abelian kernel.
\item
\end{lem}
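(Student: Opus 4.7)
The plan is to apply Lemma~\ref{wolf} to a character $\varphi\in\Irr(G')$ lying below a nonlinear $\chi\in\Irr(G)$ and read the hypothesis $m(\chi)\le 2$ as a sharp arithmetic constraint on the Wolf subgroup $U_\varphi$. For (i), assume $m(G)=1$, fix a nonlinear $\chi$, and pick $\varphi$ beneath $\chi$ that does not extend to $G$; this is automatic when $G'$ is not perfect, since any $\varphi\in\Lin(G')\setminus\{1\}$ cannot extend (linear characters of $G$ restrict trivially to $G'$). Then by Lemma~\ref{wolf}, $\chi$ vanishes on the nonempty set $G-U_\varphi$, and $m(\chi)=1$ forces this set to be a single conjugacy class. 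Equating its cardinality $|G|(n-1)/n$ (with $n:=[G:U_\varphi]$) to the class size $|G|/|C_G(g)|$ yields $|C_G(g)|=n/(n-1)$, a positive integer only when $n=2$. Hence $C:=G-U_\varphi$ is a class of involutions $g$ with $C_G(g)=\langle g\rangle$.

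Next I would exploit $C=g^G\subseteq gG'$ (which holds because $G/G'$ is abelian) together with $|C|=|G|/2$ and $|gG'|=|G'|$ to conclude $|G:G'|\le 2$, hence $|G:G'|=2$. The involution $g$ then acts fixed-point-freely on $G'$, so by Burnside's theorem on fixed-point-free automorphisms, $G'$ is abelian of odd order with $g$ inverting $G'$, whence $G$ has the required Frobenius structure. The residual case $G'$ perfect (giving $G$ nonsolvable) is the genuine obstruction: I would settle it via the CFSG, observing that in that scenario every $\varphi\in\Irr(G')$ must extend $G$-invariantly to $G$, which restricts $G$ sufficiently that one can rule it out by inspecting simple composition factors.

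For (ii), the same Wolf-subgroup machinery applies, but it now permits $[G:U_\varphi]\in\{2,3\}$ and $G-U_\varphi$ to split into at most two classes; solving the identity $1/|C_G(g_1)|+1/|C_G(g_2)|=(n-1)/n$ enumerates a short list of centralizer pairs, and matching with Burnside's bound and the structural input of Dark--Scoppola (Theorem~\ref{ds}) produces the four alternatives of the statement. The ``In particular'' then follows: $|G:G'|=1$ with $\anz(G)<1$ would force $m(G)=1$ and hence $G$ Frobenius by (i), contradicting $|G:G'|\ne 2$; for $|G:G'|=2$, the inequality $\sum_\chi m(\chi)<k(G)$, combined with the two linear characters contributing $0$ and each of the $k(G)-2$ nonlinear ones contributing at least $1$ by Burnside, forces $m(G)\le 2$, and parts (i),(ii) identify $G$ as either $S_4$ or the claimed Frobenius group, since the Frobenius family with complement of order~$3$ has $|G:G'|=3$ and the proper extensions of $C_2$ by a Frobenius group with $C_2$-complement fall into the $\anz(G)\ge 1$ alternative.
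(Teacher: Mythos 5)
The paper does not prove parts (i) and (ii) from scratch: it quotes them directly as Proposition 2.7 of \cite{chi} and Theorem H of \cite{ms}, and only argues the final ``in particular'' statement (via Lemma \ref{ave} applied to $\{\chi\}\cup\Lin(G)$ to force $m(G)\le 2$, then inspection of the lists in (i) and (ii)). Your attempt to reprove (i) and (ii) via the Wolf subgroup is therefore a genuinely different route, and your argument for (i) in the case $G''<G'$ is correct and rather elegant: taking $\varphi\in\Lin(G')\setminus\{1\}$, the set $G-U_\varphi$ is nonempty, normal-coset-shaped, and a single class, which forces $|G:U_\varphi|=2$, $|C_G(g)|=2$, $|G:G'|=2$, and then the fixed-point-free involution gives the Frobenius structure. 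This is close in spirit to the paper's own Lemma \ref{pro}.

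However, there are genuine gaps. First, the case $G'=G''$ in (i) is not handled: your machinery only shows that every $\varphi\in\Irr(G')$ must then extend to $G$, and ``rule it out by inspecting simple composition factors'' is not an argument. This is exactly the point where the paper leans on the CFSG-dependent result of \cite{chi} (and which the author isolates as the first Proposition at the end of Section 3), so it cannot be dismissed in a clause. Second, part (ii) is asserted rather than proved. It is the content of Theorem H of \cite{ms}, a substantial classification; your sketch does not address the case where the relevant $\varphi$ extends to $G$ (so that no Wolf-subgroup constraint is available), does not explain how the nonsolvable groups in the ``$\anz(G)\ge 1$'' alternative (e.g.\ $A_5$ and $\PSL_2(7)$) arise, and invokes Dark--Scoppola without first establishing that $G$ is a Camina group. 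Finally, a small error in the ``in particular'': the extensions of a group of order $2$ by a Frobenius group with complement of order $2$ (such as $D_{12}$ and $C_3{:}C_4$) do \emph{not} satisfy $\anz(G)\ge 1$; they are excluded because they have $|G:G'|=4$, not because of their value of $\anz$.
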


\begin{proof}
Part (i) is Proposition 2.7 of \cite{chi}. Part (ii) follows from Theorem H of \cite{ms}. (It suffices to check that $\anz(A_5)=1$ and $\anz(\PSL_2(7))=4/3$.)

Now, we will prove the last statement. Suppose that $|G:G'|\leq2$ and  $\anz(G)<1$. By way of contradiction,  assume that there exists $\chi\in\Irr(G)$ such that $m(\chi)\geq3$. Set $\SSS=\{\chi\}\cup\Lin(G)$. Note that $|\SSS|\leq3$. Hence, using Lemma \ref{ave} we have  $\anz(G)\geq\anz(\SSS)\geq1$, a contradiction. Hence $m(G)\leq2$. If $m(G)=1$, then $G$ is one of the groups in (i).
Assume now that $m(G)=2$. It is easy to see that among the groups that appear in (ii) only $G=S_4$ satisfies that $|G:G'|\leq 2$.

Conversely, it is easy to see that if   $G=S_4$ or $G$ is a Frobenius group with complement of order $2$ and odd order abelian kernel then $\anz(G)<1$.
\end{proof}

Next, we see that Theorem A also holds when $G$ is solvable and has an irreducible character $\chi$ such that $m(\chi)=1$.

\begin{lem}
\label{qian}
Let $G$ be a solvable group. Suppose that there exists $\chi\in\Irr(G)$ such that $m(\chi)=1$. Then one of the following holds:
\begin{enumerate}
\item
$G$ is a Frobenius group with a complement of order $2$.
\item
There are normal subgroups $M,N$ of $G$ such that $M$ is a Frobenius group with kernel $N$ and $G/N$ is a Frobenius group of order $p(p-1)$ for some odd prime $p$.
\item
There are normal subgroups $M,L$ of $G$ such that $M/L=Q_8$, $G/L=\SL_2(3)$
and $M$ is a Frobenius group with kernel $L$.
\item
$G=H_pP$ where $p\in\{2,3,5,7\}$, $P$ is a normal Sylow $p$-subgroup of $G$, $P$ is either $Q_8$ or the extraspecial group of order $p^3$ and exponent $p$, $H_p$ is a cyclic subgroup of $\SL_2(p)$ of order $p^2-1$, $H_p$ acts frobeniously on $P/P'$ and trivially on $P'$, and $H_2=C_3$, $H_3=Q_8$, $H_5=\SL_2(3)$ and $H_7/H_7\cap\SL_2(7)=S_4$.
\end{enumerate}
In particular, if $G$ is solvable, $\anz(G)<1$ and there exists  some  character $\chi\in\Irr(G)$ such that $m(\chi)=1$, then $G=S_4$ or  $G$ is a Frobenius group with complement of order $2$ and abelian kernel of odd order.
\end{lem}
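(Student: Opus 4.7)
My plan is to take the four-case structural classification (i)--(iv) established in the main body of the lemma and, in each case, to determine which groups also satisfy $\anz(G)<1$. The key leverage is the last paragraph of Lemma~\ref{sma}: once $|G:G'|\le 2$, the answer is already exactly $S_4$ together with the Frobenius groups with complement of order $2$ and odd abelian kernel. So it suffices, in each of cases (ii)--(iv), either to show $|G:G'|\le 2$ (and then invoke Lemma~\ref{sma} to isolate $S_4$, since a Frobenius group with $C_2$-complement does not arise in those cases), or else to exhibit enough zeros to force $\anz(G)\ge 1$.

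Case (i) is immediate: the complement $C_2$ inverts the kernel, so the kernel is abelian of odd order, $|G:G'|=2$, and Lemma~\ref{sma} applies. In case (ii) one has $G/N=F_{p(p-1)}$ and $(G/N)'=M/N$ of index $p-1$, so $|G:G'|\ge p-1$; the condition $|G:G'|\le 2$ forces $p=3$ and $N\le G'$. For the remaining subcases I would choose $\SSS\subseteq\Irr(G)$ containing all linear characters together with (a) the inflation of the unique nonlinear character of $G/N$, which has degree $p-1$ and vanishes on $p-2$ classes of $G/N$, each lifting to at least one class of $G$; and (b) characters of $G$ induced from nontrivial $\lambda\in\Irr(N)$ via Clifford theory and Lemma~\ref{wolf}: since $M$ is Frobenius over $N$, $I_G(\lambda)$ cannot meet $M\setminus N$, so the associated Wolf subgroup lies properly in $G$ and the induced characters vanish on many classes. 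The bookkeeping will force $\anz(\SSS)\ge 1$ whenever $p>3$ or $N\not\le G'$, leaving $p=3$, $N\le G'$, hence $G=S_4$ by Lemma~\ref{sma}.

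Cases (iii) and (iv) are handled along the same lines. In case (iii), a direct check of the character table of $\SL_2(3)$ shows that $\anz(\SL_2(3))=1$ (the seven irreducibles contribute a total of seven zero-classes), and each zero-class of $G/L$ lifts to at least one of $G$, so the inflated set $\SSS$ already has $\anz(\SSS)\ge 1$. When $L\le G'$ the inflated set contains all linear characters of $G$ and Lemma~\ref{ave} finishes; when $L\not\le G'$ one supplements $\SSS$ with characters induced from nontrivial $\lambda\in\Irr(L)$, whose Wolf subgroups are proper in $G$ because $M/L=Q_8$ is Frobenius over $L$. In case (iv), $G=H_pP$ with $P$ extraspecial (or $Q_8$) and $H_p$ acting Frobeniously on $P/P'$; the faithful irreducible characters of $P$ have inertia subgroups meeting $H_p$ trivially, so by Lemma~\ref{wolf} they induce to characters of $G$ vanishing on all of $G\setminus P$, producing an abundance of zero-classes, and a direct count yields $\anz(G)\ge 1$.

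The main obstacle I foresee is the detailed bookkeeping in case (ii): one must carefully estimate how $G$-classes in $G\setminus M$ sit above $G/N$-classes, and combine this with the contribution of the Clifford-induced characters coming from $\Irr(N)$, in order to rule out every configuration other than $p=3$, $N\le G'$. Once that is done, Lemma~\ref{sma} produces the conclusion at once.
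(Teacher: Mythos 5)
Your overall strategy coincides with the paper's: take Qian's classification (i)--(iv) as given and then, for each of (ii)--(iv), either force $|G:G'|\le 2$ and invoke the last part of Lemma \ref{sma}, or exhibit enough vanishing to get $\anz(G)\ge 1$. However, two of your case analyses have genuine gaps. In case (ii) the decisive point, which your outline lacks, is that for $i\ge 2$ each coset $x_iM$ is the union of at least \emph{two} $G$-classes (because $x_i$ does not act Frobeniusly on $N$), so the single nonlinear character $\tau$ of $G/N$ already vanishes on at least $2(p-2)$ classes of $G$; since $N\le M'\le G'$ forces $|\Lin(G)|=p-1$, this gives $\anz(\{\tau\}\cup\Lin(G))\ge 2(p-2)/p\ge 1$ for $p\ge 5$, while $p=3$ yields $|G:G'|=2$ and Lemma \ref{sma} finishes. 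With only ``each zero-class of $G/N$ lifts to at least one class of $G$'' you get $(p-2)/(p-1)<1$, and the supplementary characters over nontrivial $\lambda\in\Irr(N)$ are exactly the ``bookkeeping'' you defer: you would need to control $I_G(\lambda)$ (which need not equal $N$) and count $G$-classes in $M-N$, none of which is carried out. Note also that Lemma \ref{wolf} concerns characters of $G'$, not of an arbitrary normal subgroup, so ``the Wolf subgroup associated to $\lambda\in\Irr(N)$'' is not defined; plain Clifford theory is what you would actually have to use.

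In case (iv) your key claim is backwards: since $H_p$ acts trivially on $P'=\zent{P}$ and the faithful irreducible characters of $P$ (of degree $p$) are determined by their central character, they are $H_p$-invariant, so their inertia group is all of $G$. It is the nontrivial \emph{linear} characters of $P$ whose inertia groups meet $H_p$ trivially (by the Frobenius action on $P/P'$), and these contribute only a single induced character of degree $p^2-1$, vanishing exactly on $G-P$. So the claimed abundance of zero-classes does not come from where you say it does, and the count must be redone; the paper simply inspects the character tables of the four groups in (iv). Case (iii) is essentially fine, though the subcase $L\not\le G'$ you guard against cannot occur (again $L\le M'\le G'$ because $M$ is Frobenius with kernel $L$), and the paper disposes of it in one line via $\anz(G)\ge\anz(G/L)=\anz(\SL_2(3))=1$ using Lemma \ref{ave}.
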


\begin{proof}
The first part is Theorem 1.1 of \cite{qia}. Now, we want to prove that if $G$ is one of the groups in (ii)-(iv) then either $G=S_4$ or $\anz(G)\geq1$.

Suppose now that $G$ is one of the groups in (ii). Note that if $\tau\in\Irr(G/N)$ is not linear and $\{1=x_1,\dots,x_{p-1}\}$ is a complete set of representatives of the cosets of $M$ in $G$ then $\tau$ vanishes on $x_iM$ for every $i\geq2$. It is easy to see that for every $i\geq2$,  $x_iM$ consists of at least $2$ $G$-conjugacy classes (because $x_i$ does not act frobeniously on $N$). Thus $\tau$ vanishes on at least $2(p-2)$ conjugacy classes. Therefore,
$$
1>\anz(G)\geq\anz(\{\tau\}\cup\Lin(G))\geq\frac{2(p-2)}{p}
$$
and we deduce that $p=3$. Hence $G/N=S_3$. In particular, $|G:G'|=2$ and the result follows from Lemma \ref{sma}.

Next, assume that $G$ is one of the groups in (iii). In this case
$$
1>\anz(G)\geq\anz(G/L)=1,
$$
by Lemma \ref{ave}. This is a contradiction. Finally, if $G$ is one of the groups in (iv), one can also see in the character table of those groups that $\anz(G)\geq1$, again a contradiction.
\end{proof}

 The following property of the Wolf subgroup of groups with average number of zeros less than $1$ will be essential in our proof of Theorem A.

\begin{lem}
\label{pro}
Let $G$ be a group with $\anz(G)<1$. Suppose that $G\neq D_{12}$ and $G\neq C_3:C_4$.  If $\varphi\in\Irr(G')$ then the Wolf subgroup $U$ associated to $\varphi$ is either $G'$ or $G$.
\end{lem}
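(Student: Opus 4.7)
The plan is to assume for contradiction that $G' \lneq U \lneq G$ and show this forces $G \in \{D_{12}, C_3:C_4\}$. Set $a = |G:U|$ and $b = |U:G'|$, both at least $2$, and let $\chi_1, \ldots, \chi_b$ denote the irreducible characters of $G$ lying over $\varphi$ produced by Lemma~\ref{wolf}; each of them vanishes on $G - U$.

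The first block of the argument will pin down $a$ and $b$. Let $c$ denote the number of $G$-conjugacy classes contained in $G-U$. Applying Lemma~\ref{ave} to $\SSS = \Lin(G) \cup \{\chi_1,\dots,\chi_b\}$, which has $b(a+1)$ elements, gives $\anz(\SSS) \leq \anz(G) < 1$; since $m(\chi_i) \geq c$ for each $i$, this yields $c \leq a$. For the matching lower bound, I would use that $G/G'$ is abelian: every $G$-class lies in a single $G'$-coset, and each non-identity $U$-coset is a disjoint union of $b$ distinct $G'$-cosets, so no such coset can be a single $G$-class once $b \geq 2$. Hence the $a-1$ non-identity $U$-cosets contribute $c \geq 2(a-1)$ classes. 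The two bounds force $a = 2$, and the same coset-counting applied inside the unique non-identity $U$-coset forces $b = 2$.

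At this point $|G:G'| = 4$ and $G - U$ consists of exactly two $G$-classes on which $\chi_1,\chi_2$ both vanish. Second orthogonality at any $g \in G - U$, combined with the contribution $|G:G'| = 4$ from linear characters, forces $|C_G(g)| \geq 4$; the two class sizes in $G-U$ must then each equal $|G|/4$, so $|C_G(g)| = 4$, and back-substituting into orthogonality shows that \emph{every} nonlinear irreducible character of $G$ vanishes on $G - U$. Consequently each nonlinear $\chi$ has $m(\chi) \geq 2$, and $\anz(G) < 1$ now yields $k(G) \leq 7$.

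The final block is a case split on $k(G)$. If $k(G) = 7$ there is a single extra nonlinear character $\rho$ with $m(\rho) = 2$ lying over some $\varphi' \in \Irr(G') \setminus \{1_{G'}\}$ outside the $G$-orbit of $\varphi$; I would examine its Wolf subgroup $U'$. The options $U' \in \{U, G\}$ produce too many irreducibles of $G$ over $\varphi'$ (overflowing $k(G) = 7$), while any remaining option ($U' = G'$, together with $U'$ a second index-$2$ subgroup when $G/G' \cong C_2\times C_2$) forces $U \cap U' = G'$ in $G$, so $\rho$ vanishes on $(G-U) \cup (G-U') = G - G'$ and hence on at least three classes, contradicting $m(\rho) = 2$. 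This leaves $k(G) = 6$; then $\sum_\chi \chi(1)^2 = |G|$ gives $|G| = 4 + 8\varphi(1)^2$ and $|G'| = 1 + 2\varphi(1)^2$, and the divisibility $\varphi(1) \mid |G'|$ collapses to $\varphi(1) = 1$. Thus $|G| = 12$ with $|G:G'| = 4$, identifying $G$ as $D_{12}$ or $C_3:C_4$ and contradicting the hypothesis. I expect the trickiest step to be the $k(G) = 7$ analysis, which requires enumerating the possible positions of $U'$ in both $G/G' \cong C_4$ and $G/G' \cong C_2 \times C_2$ and ruling each one out.
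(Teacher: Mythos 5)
Your proof is correct, and while its first half matches the paper's, its second half takes a genuinely different route. Both arguments begin identically: averaging over $\Lin(G)$ together with the $|U:G'|$ characters over $\varphi$, combined with the fact that each nontrivial coset of $U$ splits into at least $|U:G'|$ classes, forces $|G:U|=|U:G'|=2$ and shows $G-U$ is exactly two classes of size $|G|/4$. From there the paper splits on $m(G)$: when $m(G)=2$ it invokes Lemma~\ref{sma}(ii) (i.e.\ Theorem H of \cite{ms}, an external classification of groups whose characters all vanish on at most two classes) to identify $G$ as an extension of $C_2$ by a Frobenius group and hence as $D_{12}$ or $C_3:C_4$, and when some $m(\chi)\geq 3$ it gets a quick averaging contradiction. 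You instead squeeze more out of second orthogonality: since the four linear characters already account for all of $|C_G(g)|=4$ at $g\in G-U$, \emph{every} nonlinear character vanishes on $G-U$, which bounds $k(G)\leq 7$ and lets you finish by elementary counting --- the $k=7$ case dies by locating the Wolf subgroup of the third nonlinear character, and the $k=6$ case yields $|G|=4+8\varphi(1)^2$, hence $\varphi(1)=1$ and $|G|=12$. Your endgame is longer but entirely self-contained, avoiding the appeal to \cite{ms}. One step you should write out explicitly in the $k=6$ case is why $\chi_i(1)=2\varphi(1)$: by Lemma~\ref{wolf} the index $|I_G(\varphi):U|$ must be a perfect square, so $I_G(\varphi)=U$ and each $\chi_i$ is induced from an extension of $\varphi$ to $U$.
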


\begin{proof}
By way of contradiction, assume that $G'<U<G$. Write $e=|G:U|$ and $f=|U:G'|$, so that $|G:G'|=ef$.
  Then each coset $gU$ contains at least $2$ $G$-conjugacy classes (because the size of any conjugacy class cannot exceed $|G'|$). It follows that $G-U$ contains at least $2(e-1)$ $G$-conjugacy classes.  The average number of zeros  of the $f$ irreducible characters of $G$ that lie over $\varphi$ and   the linear characters is at least
 $$
 \frac{2f(e-1)}{ef+f}=\frac{2(e-1)}{e+1}
  \geq1
  $$
  if $e\geq3$. By Lemma \ref{ave}, we conclude that $e=2$. Let $gU$ be the nontrivial coset of $U$ in $G$. If $gU$ is the union of at least $3$ $G$-conjugacy classes then, as before, the average number of zeros  of the $f$ irreducible characters of $G$ that lie over $\varphi$ and   the linear characters is at least $3f/(2f+f)=1$. Using Lemma \ref{ave} we obtain a contradiction. Hence, $gU$ is the union of exactly $2$ $G$-conjugacy classes. In particular, $G$ has some conjugacy class of size at least $|U|/2=|G|/4$. Hence $|\Lin(G)|=|G:G'|=4$. Since $e=2$, we deduce that $f=2$. We know that the $2$ irreducible characters of $G$ lying over $\varphi$ vanish on the $2$ conjugacy classes in $G-U$.

Assume first that $m(G)=2$.   By Lemma \ref{sma}, we know that $G$ is an extension of a group of order $2$ by a Frobenius group with complement of order $2$ and odd order abelian kernel. If $Z=Z(G)$ and $K/Z$ is the Frobenius complement, then we know that $gK$ is the union of two conjugacy classes for any $g\in G-K$. Any nonlinear irreducible character of $G$ vanishes on $gK$ (is induced from $K$) so the number of nonlinear irreducible characters of $G$ is at most $3$ (because $\anz(G)<1$). It is easy to conclude that $G=D_{12}$ or $G:C_3:C_4$, a contradiction.

   Hence, we may assume that there exists $\chi\in\Irr(G)$ such that $m(\chi)\geq3$. If $\chi$ is one of the two irreducible characters of $G$ that lie over $\varphi$, then the other one, say $\chi'$,  also vanishes on at least $3$ conjugacy classes. Then $\anz(\{\chi,\chi'\}\cup\Lin(G))\geq1$, a contradiction by Lemma \ref{ave}. Suppose now that  $\chi$ is not one of the two irreducible characters of $G$ that lie over $\varphi$. Let $\theta_1,\theta_2$ be these two irreducible characters. Then $\anz(\{\chi,\theta_1,\theta_2\}\cup\Lin(G))\geq1$, another contradiction by Lemma \ref{ave}. This completes the proof.
  \end{proof}

  \begin{cor}
  \label{meta}
  Let $G$ be a metabelian group with $\anz(G)<1$. Then  $G=D_{12}$, $G=C_3:C_4$ or $G$ is a Camina group.
  \end{cor}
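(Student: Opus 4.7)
The plan is to derive this as a short corollary of Lemma \ref{pro} together with the characterization of Camina groups given in Lemma \ref{char}. In view of the exceptions $D_{12}$ and $C_3:C_4$ stated in the conclusion, assume $G$ is metabelian with $\anz(G) < 1$ and $G \notin \{D_{12}, C_3:C_4\}$; I will show $G$ is a Camina group. By Lemma \ref{char}, this amounts to showing that every nonlinear $\chi \in \Irr(G)$ vanishes on all of $G - G'$.

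Fix such a $\chi$, and let $\varphi \in \Irr(G')$ be any irreducible constituent of $\chi_{G'}$. Since $G$ is metabelian, $G'$ is abelian, so $\varphi$ is linear, i.e.\ $\varphi(1) = 1$. Let $U$ be the Wolf subgroup associated with $\varphi$, as furnished by Lemma \ref{wolf}. Because $G$ satisfies the hypotheses of Lemma \ref{pro}, we have $U = G'$ or $U = G$.

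Suppose for contradiction that $U = G$. Then $\varphi$ extends to $G$, and by Gallagher's theorem every irreducible character of $G$ lying over $\varphi$ has the form $\hat\varphi \lambda$ with $\hat\varphi$ an extension of $\varphi$ and $\lambda \in \Irr(G/G')$. In particular each such character has degree $\varphi(1) = 1$ and is therefore linear, contradicting the fact that the nonlinear $\chi$ lies over $\varphi$. Hence $U = G'$, and Lemma \ref{wolf} gives $\chi(x) = 0$ for all $x \in G - G'$, as required.

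Applying this to every nonlinear $\chi \in \Irr(G)$ and invoking Lemma \ref{char}, we conclude that $G$ is a Camina group. There is no real obstacle here; the whole content of the corollary has been packaged into Lemma \ref{pro}, and the role of metabelianness is only to force $\varphi \in \Irr(G')$ to be linear, so that the Wolf subgroup $U = G$ forces $\chi$ to be linear as well.
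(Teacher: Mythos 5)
Your proof is correct and follows essentially the same route as the paper: both use Lemma \ref{pro} to force the Wolf subgroup of the relevant linear characters of $G'$ to be $G'$ (ruling out $U=G$ because an extension of $\varphi$ would make all characters over $\varphi$ linear), and then conclude via Lemma \ref{wolf} and Lemma \ref{char}. No issues.
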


  \begin{proof}
  Suppose that $G\neq D_{12}$ and $G\neq C_3:C_4$.
  Since $G'$ is abelian, any $\lambda\in\Irr(G')$ is linear. Hence, if $\lambda\neq1_{G'}$, $\lambda$ does not extend to $G$, so the Wolf subgroup associated to $\lambda$ is proper in $G$. By Lemma \ref{pro}, it has to be $G'$. Therefore, any nonlinear irreducible character of $G$ vanishes on $G-G'$.  By Lemma \ref{char}, $G$ is a Camina group.
  \end{proof}

  Next, we determine the metabelian Camina groups with small average number of zeros.

  \begin{lem}
  \label{pgp}
Let $G$ be a Camina $p$-group. If $\anz(G)<1$ then $G$ is an extraspecial $2$-group. Furthermore, if $p$ is odd, then $\anz(G)\geq16/11$.
\end{lem}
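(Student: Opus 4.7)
My plan is to bound $\anz(G)$ from below using the Camina property and then refine the bound in the case $p=2$. By Lemma \ref{char}, every nonlinear $\chi \in \Irr(G)$ vanishes on $G - G'$, and since $G$ is Camina each nontrivial coset $gG'$ is a single conjugacy class; so $G - G'$ contains exactly $a - 1$ classes, where $a = |G:G'|$. With $k = k(G)$, the $k - a$ nonlinear characters each have $m(\chi) \geq a - 1$, giving
$$
\anz(G) \ \geq\ \frac{(k - a)(a - 1)}{k}.
$$

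I would then lower-bound $k$. In any nonabelian Camina group $Z(G) \leq G'$: an element $z \in Z(G) - G'$ would satisfy $|z^G| = 1 \neq |G'| = |zG'|$. Combined with $|Z(G)| \geq p$, this gives $|Z(G)|$ singleton classes in $G'$ and hence $k \geq a - 1 + p$. Plugging in,
$$
\anz(G) \ \geq\ \frac{(p - 1)(a - 1)}{a + p - 1}.
$$
The right side is strictly increasing in both arguments, and a nonabelian $p$-group satisfies $a \geq p^2$ (otherwise $G/\Phi(G)$ is cyclic, forcing $G$ cyclic). For odd $p$ the minimum is attained at $(p, a) = (3, 9)$ and equals $16/11$, proving the second assertion (with equality on the extraspecial $3$-group of order $27$) and forcing $p = 2$ whenever $\anz(G) < 1$.

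For $p = 2$ I would refine as follows. From $\anz(G) < 1$ one gets $k < a(a-1)/(a-2)$, which, combined with $k \geq a - 1 + |Z(G)|$, yields $|Z(G)| < 2(a-1)/(a-2) \leq 3$ (using $a \geq 4$), hence $|Z(G)| = 2$. By Dark-Scoppola (Theorem \ref{ds}), $G$ has class $2$ or $3$. In class $2$, $G' \leq Z(G)$ together with the reverse inclusion above gives $Z(G) = G'$ of order $2$, so $G$ is extraspecial.

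The main obstacle is eliminating the class $3$ case. Here $G_3 \leq Z(G)$ combined with $|Z(G)| = 2$ forces $Z(G) = G_3$ of order $2$, and class exactly $3$ forces $|G'/G_3| \geq 2$, hence $|G'| \geq 4$. Because $[g, G] \subseteq [G', G] = G_3$ for any $g \in G'$, each class in $G' - G_3$ has size at most $|G_3| = 2$, and being noncentral it has size exactly $2$. Counting classes, $k \geq (a - 1) + |G_3| + (|G'| - |G_3|)/2 = a + |G'|/2$. Comparing with $k < a + a/(a-2)$ then gives $|G'| < 2a/(a-2) \leq 4$ for $a \geq 4$, contradicting $|G'| \geq 4$ and completing the reduction to the extraspecial case.
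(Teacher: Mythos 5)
Your proof is correct, but it follows a genuinely different route from the paper's. The paper passes to a quotient: it takes $N\trianglelefteq G$ maximal in $G'$, invokes Theorem A of \cite{fm} to see that $G/N$ is extraspecial, computes $\anz(G/N)=\frac{(p-1)(p^{2n}-1)}{p^{2n}+p-1}\geq 16/11$ for $p$ odd, and transfers this back to $G$ via Lemmas \ref{ave} and \ref{2}; for $p=2$ it uses the semi-extraspecial structure of $G$ (implicitly, that Camina $2$-groups have class $2$) together with Beisiegel's bound $|G'|\leq p^n$ to force $|G'|=2$. You instead work directly in $G$: the Camina property gives exactly $a-1$ classes outside $G'$ and at least $|Z(G)|\geq p$ central singletons inside, yielding $\anz(G)\geq\frac{(p-1)(a-1)}{a+p-1}$ with $a=|G:G'|\geq p^2$, which already gives $16/11$ for odd $p$ (your bound coincides with the paper's quotient formula at $n=1$); for $p=2$ you deduce $|Z(G)|=2$, split by nilpotency class using Dark--Scoppola, and eliminate class $3$ by an elementary count of the size-$2$ classes in $G'-G_3$. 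This buys self-containedness — you avoid \cite{fm}, \cite{bei}, and the uncited fact that Camina $2$-groups have class $2$ — at the cost of the extra class-$3$ analysis that the paper's structural citations sidestep. Two small points to polish: the conclusion ``$Z(G)=G'$ of order $2$, so $G$ is extraspecial'' still needs $\Phi(G)=G'$, which follows in class $2$ from $[g^2,x]=[g,x]^2=1$; and the minimization of $\frac{(p-1)(a-1)}{a+p-1}$ should be phrased as: minimize first over $a\geq p^2$ (monotonicity in $a$), then over odd $p\geq3$, since the constraint region is not a product.
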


\begin{proof}
Let $N$ be a normal subgroup of $G$ maximal in $G'$. Therefore, $G/N$ is extraspecial (by Theorem A of \cite{fm}, for instance). Write $|G/N|=p^{2n+1}$. By the character theory of extraspecial groups, $G/N$ has $p^{2n}$ linear characters and $p-1$ nonlinear irreducible characters that vanish of $G/N-(G/N)'$. There are exactly $(p^{2n+1}-p)/p=p^{2n}-1$ conjugacy classes in this subset, whence
  $$
  \anz(G/N)=\frac{(p-1)(p^{2n}-1)}{p^{2n}+p-1}\geq16/11
  $$
  if $p\geq3$.
  In this case, using Lemma \ref{ave} and Lemma \ref{2}, we have $\anz(G)\geq\anz(G/N)\geq16/11$. Hence, we may assume that $p=2$, so $G$ is a semiextraspecial $2$-group. Write $|G:G'|=2^{2n}$ and $|G'|=2^d$. It follows from Theorem 1 of \cite{bei}  that $d\leq n$. We have that $G$ has $2^{2n}$ linear characters and $2^d-1$ nonlinear irreducible characters that vanish of $G-G'$. There are exactly $(2^{2n+d}-2^d)/2^d=2^{2n}-1$ conjugacy classes in this subset, whence
  $$
  1>\anz(G)=\frac{(2^d-1)(2^{2n}-1)}{2^{2n}+2^d-1}.
  $$
Therefore $2^{2n+d}<2^{2n+1}+2^{d+1}-2$. If $d\geq2$ then
$$
2^{2n+d}\geq2^{2n+2}=2^{2n+1}+2^{2n+1}\geq2^{2n+1}+2^{2d+1}\geq
2^{2n+1}+2^{d+1}-2,
$$
a contradiction. We conclude that $d=1$, so $G$ is an extraspecial $2$-group.
\end{proof}

\begin{lem}
\label{frob}
Let $G$ be a metabelian Frobenius group with cyclic complement. If $\anz(G)<1$ then $G$ is one of the following:
\begin{enumerate}
\item
A Frobenius group with complement of order $2$ and odd order abelian kernel.
\item
A Frobenius group with cyclic complement of order $p^n-1$ and elementary abelian kernel of order $p^n$ for some prime $p$ and positive integer $n$.
\item
The Frobenius group of order $21$.
\end{enumerate}
\end{lem}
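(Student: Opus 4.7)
The plan is to compute $\anz(G)$ explicitly and then reduce the inequality $\anz(G) < 1$ to a small Diophantine problem. Writing $G = K \rtimes C$ with $K$ the kernel and $C$ the cyclic complement, we have $G' \leq K$ since $G/K \cong C$ is abelian, and $G' \geq [K,C] = K$ by fixed-point-freeness; hence $G' = K$, which is abelian by the metabelian hypothesis. Put $q = |K|$ and $d = |C|$, so $d \mid q-1$ and $\gcd(q,d) = 1$. By Theorem \ref{ds} and Lemma \ref{char}, every nonlinear $\chi \in \Irr(G)$ vanishes on $G - G' = G - K$. Each nontrivial coset $cK$ (for $c \in C \setminus \{1\}$) is a single $G$-conjugacy class because $k \mapsto (c^{-1}kc)k^{-1}$ is a bijection of $K$. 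A standard orbit count on $\Irr(K) \setminus \{1_K\}$ yields $(q-1)/d$ nonlinear irreducible characters and $1 + (q-1)/d$ classes inside $K$.

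Collecting these numbers gives
\[
\anz(G) = \frac{(d-1)(q-1)}{d^2 + q - 1},
\]
and $\anz(G) < 1$ rearranges to $q(d-2) < (d+2)(d-1)$. When $d = 2$ the inequality is trivial and produces exactly case (i). When $d \geq 3$ it becomes $q < d + 3 + 4/(d-2)$, and combined with $d \mid q-1$ the only possibilities are $q = d+1$ or $q \geq 2d+1$.

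If $q = d+1$, then $C$ acts transitively on $K \setminus \{1\}$, forcing all nonidentity elements of $K$ to share a common prime order $p$; hence $K$ is elementary abelian of order $p^a$ and $d = p^a - 1$, which is precisely case (ii). If $q \geq 2d+1$, then $(2d+1)(d-2) < (d+2)(d-1)$ collapses to $d^2 - 4d < 0$, forcing $d = 3$; the bound then forces $q = 7$, yielding the Frobenius group of order $21$ of case (iii). The converse is a one-line substitution into the formula above verifying $\anz(G) < 1$ in each of (i)--(iii). I do not expect a substantial obstacle: once the explicit formula for $\anz(G)$ is in hand, the case analysis on $(d,q)$ is elementary, and the only mild subtlety is the argument that $q = d+1$ forces $K$ to be elementary abelian.
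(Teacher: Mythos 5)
Your proposal is correct and follows essentially the same route as the paper's proof: both rest on the facts that every nonlinear character of $G$ vanishes on the $|C|-1$ conjugacy classes outside the abelian kernel $K$, average those zeros against the $|C|$ linear characters, and solve the resulting elementary inequality (the paper's unknown is the number $k$ of nonlinear characters, yours is $q=|K|$ with $k=(q-1)/d$, so the case analyses coincide). One small caveat: your displayed expression for $\anz(G)$ should be a ``$\geq$'' rather than an ``$=$'', since an induced character could a priori also vanish at elements of $K$; this does not affect the implication actually being proved (only the lower bound is used), and note that the vanishing on $G-K$ follows directly from induction from $K$, so the appeal to Theorem~\ref{ds} is unnecessary.
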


\begin{proof}
Let $C$ be the Frobenius complement and let $K$ be the Frobenius kernel. By Lemma \ref{sma}, we may assume that $|C|=|G:G'|=n\geq3$. Any nonlinear irreducible character of $G$ vanishes on $G-K$. There are $n-1$ conjugacy classes of $G$ contained in $G-K$. Let $k$ be the number of nonlinear irreducible characters of $G$. Then
$$
1>\anz(G)\geq\frac{k(n-1)}{n+k},
$$
so $k<n/(n-2)$.

We conclude that if $n>3$, then $k=1$. Therefore, there exists a unique nontrivial orbit of $C$ on $\Irr(K)$. It follows that (ii) holds.

If $k>1$ then $n=3$ and $k=2$. Hence, (iii) holds.
\end{proof}

If $N$ is a normal subgroup of a group $G$, we write $\Irr(G|N)$ to denote the set of irreducible characters of $G$ whose kernel does not contain $N$.
Now, we are ready to complete most of the proof of Theorem A.

  \begin{thm}
  \label{first}
  Let $G$ be a finite group with $\anz(G)<1$. Then either $G=S_4$ or $G$ is metabelian.
  \end{thm}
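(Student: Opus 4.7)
The plan is to apply the Wolf-subgroup dichotomy of Lemma~\ref{pro} to $\Irr(G')$, combine it with Gallagher's theorem and the averaging principle of Lemma~\ref{ave}, and reduce to the classification of Lemma~\ref{qian}. Argue by contradiction: suppose $\anz(G)<1$, $G\neq S_4$, and $G$ is not metabelian, so that $G''\neq 1$. Since $D_{12}$ and $C_3:C_4$ are metabelian, we may assume $G$ is neither of them and therefore Lemma~\ref{pro} applies. By Lemma~\ref{sma} we may further assume $|G:G'|\geq 3$. Because $G'$ is nonabelian there exists a nonlinear $\varphi\in\Irr(G')$, and Lemma~\ref{pro} forces the Wolf subgroup $U(\varphi)\in\{G',G\}$ for every such $\varphi$. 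The proof splits on which of the two possibilities occurs for some nonlinear~$\varphi$.

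In the favorable case, some nonlinear $\varphi\in\Irr(G')$ extends to $\hat\varphi\in\Irr(G)$. By Gallagher's theorem the full set of extensions is $\{\hat\varphi\lambda:\lambda\in\Lin(G)\}$, and since $|\lambda(g)|=1$ everywhere, these $|G:G'|$ characters share the same zero set. Letting $k=m(\hat\varphi)$ and applying Lemma~\ref{ave} to $\SSS=\Lin(G)\cup\{\hat\varphi\lambda:\lambda\in\Lin(G)\}$, which has $2|G:G'|$ elements, yields
$$
1>\anz(G)\geq\anz(\SSS)=\frac{|G:G'|\cdot k}{2|G:G'|}=\frac{k}{2},
$$
so $k=1$ by Burnside. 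Once the solvability of $G$ is in hand, Lemma~\ref{qian} applies, and its final ``in particular'' clause forces $G=S_4$ or $G$ to be a Frobenius group with complement of order~$2$ and abelian kernel of odd order; only the former is not metabelian, contradicting $G\neq S_4$.

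In the unfavorable case, $U(\varphi)=G'$ for every nonlinear $\varphi\in\Irr(G')$. Then every $\chi\in\Irr(G)$ lying over any such $\varphi$ vanishes on $G-G'$, and Lemma~\ref{wolf} gives exactly one such $\chi$ per $G$-orbit of nonlinear $\varphi$. Write $t$ for the number of these characters and $n=|G:G'|\geq 3$, and note that $G-G'$ contains at least $n-1$ conjugacy classes of $G$. Lemma~\ref{ave} applied to $\Lin(G)$ together with these $t$ characters yields $1>t(n-1)/(n+t)$, i.e., $t(n-2)<n$, whence $t\leq 2$. To close this case I exploit that $G/G''$ is metabelian and satisfies $\anz(G/G'')<1$ by Lemma~\ref{ave}, so by Corollary~\ref{meta} together with Lemmas~\ref{pgp} and~\ref{frob} the quotient lies in an explicit short list (extraspecial $2$-group, one of the three Frobenius types of Lemma~\ref{frob}, or $D_{12},C_3:C_4$). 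A short case-by-case check against this list shows that whenever $G''\neq 1$ the nonabelian $G'$ must possess more than two $G$-orbits of nonlinear irreducible characters, contradicting $t\leq 2$.

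The hardest step is the solvability claim in the favorable case, since Lemma~\ref{qian} is stated only for solvable groups and the paper deliberately avoids Theorem~B of \cite{mad}. This is precisely where the classification of finite simple groups enters, as announced in the introduction: one needs to rule out, via CFSG, any nonsolvable group admitting a nonlinear irreducible character that vanishes on exactly one conjugacy class. Once that is granted, the remainder is the case analysis sketched above.
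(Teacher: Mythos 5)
Your skeleton --- the Wolf dichotomy from Lemma~\ref{pro}, Gallagher plus the averaging Lemma~\ref{ave}, and a reduction to Lemma~\ref{qian} --- matches the paper's, but both halves of your case analysis have genuine gaps. In the favorable case you derive $m(\hat\varphi)=1$ for a single character and then assert that solvability follows because CFSG rules out nonsolvable groups admitting a nonlinear irreducible character vanishing on exactly one conjugacy class. That statement is false: each degree-$3$ character of $A_5$ (and also its degree-$4$ character) vanishes on exactly one class. The paper avoids this trap by taking the stronger case hypothesis that \emph{every} nonlinear $\varphi\in\Irr(G')$ has Wolf subgroup $G$; this yields $m(\chi)=1$ for \emph{all} $\chi\in\Irr(G|G'')$, after which it splits on $G'=G''$ versus $G''<G'$, chooses $\chi\in\Irr(G|G'')$ of maximal degree so that $\chi_{G''}$ is reducible, invokes Navarro's result that $\chi$ then vanishes on a whole coset $xG''$, and applies Theorem B of \cite{gn} to the resulting coset-equals-class condition to get solvability of $G''$. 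Your weaker hypothesis (only one $\varphi$ extends) cannot support any of this, and your complementary case (no nonlinear $\varphi$ extends) does not line up with the paper's.

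In the unfavorable case the entire burden rests on the asserted ``short case-by-case check'' that whenever $G''\neq1$ and $G/G''$ lies on the metabelian list, $G'$ must have more than two $G$-orbits of nonlinear irreducible characters. This is the mathematical heart of the case and is not carried out; it is far from obvious, since constraining the quotient $G/G''$ says little a priori about $\Irr(G')$ when $G''\neq1$. The paper instead splits on whether $G-G'$ consists of exactly $d-1$ classes --- in which case $G$ is a Camina group and Dark--Scoppola (Theorem~\ref{ds}) applies, with separate eliminations of the $p$-group, cyclic-complement and quaternion-complement possibilities --- or of at least $d$ classes, in which case the counting forces $|\Irr(G|G'')|=1$, so $G''$ has at most two character degrees and is solvable, and Lemma~\ref{qian} plus one more averaging estimate finishes. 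You would need to supply an argument of comparable substance to close your version of this case.
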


  \begin{proof}
By Lemma \ref{sma}, we may assume that $|G:G'|\geq3$.
   We also assume that $G'$ is not abelian  and we want to reach a contradiction. In particular, since $G'$ is not abelian, $G\neq D_{12}$ and $G\neq C_3:C_4$.

   Assume first that the Wolf subgroup of every nonlinear irreducible character of $G'$ is $G$. In other words, assume that every  nonlinear irreducible character of $G'$ extends to $G$. Let $\chi\in\Irr(G|G'')$. Since $\chi$ lies over a nonlinear irreducible character $\mu$ of $G'$, $\chi$ extends $\mu$.

By way of contradiction, suppose  that $m(\chi)\geq2$.
  By Gallagher's theorem $\TT=\{\lambda\chi\mid\lambda\in\Irr(G/G')\}$ is a set of $|G:G'|$ irreducible characters. Furthermore, $m(\lambda\chi)\geq2$ for every $\lambda\in\Irr(G/G')$. This implies that $$\anz(G)\geq\anz(\TT\cup\Lin(G))\geq1,$$ a contradiction. We conclude that $m(\chi)=1$ for every $\chi\in\Irr(G|G'')$.

If $G'=G''$ then $\Irr(G|G'')$ is the set of nonlinear irreducible characters of $G$, so $m(\chi)=1$ for every nonlinear $\chi\in\Irr(G)$. By Lemma \ref{sma}(i) $G$ is solvable. This contradicts the fact that $G'=G''$. Therefore, $G''<G'$.

If we choose $\chi\in\Irr(G|G'')$ of maximal degree among the members of $\Irr(G|G'')$ then $\chi_{G''}$ is not irreducible (otherwise, we could take $\psi\in\Irr(G/G'')$ nonlinear and Gallagher's theorem implies that $\psi\chi\in\Irr(G|G'')$, contradicting the choice of $\chi$). By \cite{nav}, this implies that $\chi$ vanishes on $xG''$, for some $x\in G$. Since $m(\chi)=1$, all the elements in $xG''$ are conjugate. Now, Theorem B of \cite{gn} implies that $G''$, and hence $G$, is solvable. Now the result follows from Lemma \ref{qian}.

  Using Lemma \ref{pro},
we conclude that the Wolf subgroup of some nonlinear irreducible character of $G'$ is $G'$. Put $d=|G:G'|$. Suppose that there are $d-1$ conjugacy classes in $G-G'$. Then $G$ is a Camina group.

  By Theorem \ref{ds}, either $G$ is a $p$-group of nilpotence class at most $3$ or $G$ is  a Frobenius group with complement cyclic or quaternion. In the first case, we know  that the nilpotence class of $G$ is at most $3$ , so $[G',G']\leq\gamma_4(G)=1$, and $G$ is metabelian. This is a contradiction.

  Assume now that $G$ is a Frobenius group with cyclic complement $C$ of order $d$ and kernel $K$. Every nonlinear irreducible character of $G$ vanishes on $G-K$, and this consists of $d-1$ conjugacy classes. Recall that $d>2$. Let $k$ be the number of nonlinear irreducible characters of $G$. We have that
  $$
  1>\anz(G)\geq\frac{k(d-1)}{d+k},
  $$
  so $k<d/(d-2)$.
  Therefore, $k\leq2$.
  Note that if $k=1$ then $K$ is abelian, so $G$ is metabelian. This is a contradiction. Hence, we may assume that $k=2$, i.e., $G$ has exactly $2$ nonlinear irreducible characters. Since $G$ is not metabelian $K$ has class $2$. Hence, there exists a unique irreducible character in $\Irr(G/K'|K/K')$ and a unique irreducible character in $\Irr(G|K')$. Since $|C|=3$ and $|\Irr(G/K'|K/K')|=1$, we conclude that $|K/K'|=4$. By III.11.9 of \cite{hup}, we have that $|K'|=2$. Hence $K'$ is central in $G$ and $|\Irr(G|K')|=3$, a contradiction.

  Finally, we may assume that $G$ is a Frobenius group with quaternion complement $Q$ and kernel $K$. In this case, $G$ has $4$ linear characters. The nonlinear irreducible character of $G/K$ vanishes on $3$ conjugacy classes. If $\chi\in\Irr(G|K)$ then $\chi$ vanishes on at least $3$ conjugacy classes. The average number of zeros of the 4 linear characters and these two characters is at least $1$, so we have a contradiction.

  Hence, $G-G'$ consists of at least $d$ conjugacy classes. If $|\Irr(G|G'')|\geq2$  then $G$ has at least $2$ irreducible characters that vanish on at least $d$ conjugacy classes. The average number of zeros of these nonlinear characters and the $d$ linear characters is at least $1$, a contradiction. Thus $|\Irr(G|G'')|=1$. This implies that $G''$, and hence $G$, is solvable (for instance, using Theorem 12.5 of \cite{isa} because $G''$ has at most two character degrees). Hence, by Lemma \ref{qian}, we may assume that $m(\chi)>1$ for every nonlinear $\chi\in\Irr(G)$. Since $G$ is not metabelian, $G$ has at least $2$ nonlinear irreducible characters. We know that one of them vanishes on at least $d$ conjugacy classes and the other one vanishes on at least $2$ conjugacy classes. Hence, the average number of zeros of these two nonlinear irreducible  characters and the $d$ linear characters of $G$ is at least $1$. This contradiction proves that $G$ is metabelian.
  \end{proof}

  Now, we complete the proof of Theorem A.

  \begin{proof}[Proof of Theorem A]
  Suppose that $\anz(G)<1$. We may assume that $G\neq S_4$, $G\neq D_{12}$ and $G\neq C_3:C_4$. By Theorem \ref{first}, we know that $G$ is metabelian. Now, Corollary \ref{meta} implies that $G$ is a Camina group.
  Therefore, $G$ is one of the groups that appear in (i) or (ii) of Theorem \ref{ds}. If $G$ is a Camina $p$-group, then it follows from Lemma \ref{pgp} that $G$ is an extraspecial $2$-group. If $G$ is a Frobenius metabelian group, then it follows from Lemma \ref{frob} that $G$ is one of the groups that appear in (i)-(iii) of the statement of Theorem A.

 Conversely, it is easy to see that any of the groups in (i)-(vi) has average number of zeros less than $1$.
    \end{proof}

    Our proof of Theorem A relies on the classification of finite simple groups by means of Theorem B of \cite{gn} and Proposition 2.7 of \cite{chi}. One way to avoid using the CFSG in Theorem A would be to find classification-free proofs of the following results.

    \begin{pro}
    Let $S$ be a nonabelian simple group. Then there exists $\chi\in\Irr(G)$ such that $m(\chi)>1$.
    \end{pro}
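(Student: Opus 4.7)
The proof is immediate from Lemma \ref{sma}(i). Argue by contradiction: suppose $m(\chi)\le 1$ for every $\chi\in\Irr(S)$. Because $S$ is nonabelian it has nonlinear irreducible characters, and Burnside's classical vanishing theorem forces $m(\chi)\ge 1$ for each such $\chi$. Hence $m(\chi)=1$ for every nonlinear $\chi\in\Irr(S)$, which in the notation of Lemma \ref{sma} means $m(S)=1$. Lemma \ref{sma}(i) then forces $S$ to be a Frobenius group with complement of order $2$ and odd-order abelian kernel. Any such group has the Frobenius kernel as a proper nontrivial normal subgroup, contradicting the simplicity of $S$.

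\textbf{Main obstacle.} As the author observes, this argument is not classification-free, since Lemma \ref{sma}(i) is Proposition 2.7 of \cite{chi}, whose proof invokes the CFSG. An alternative CFSG-based route is to exhibit, family by family, a character with $m(\chi)\ge 2$ in every nonabelian simple group: the alternating case can be handled via Murnaghan--Nakayama (for instance the degree-$5$ character of $A_5$ vanishes on both classes of $5$-cycles), the sporadic groups are read off the Atlas, and for groups of Lie type the Steinberg character or other suitable unipotent characters typically do the job, with the finitely many small-rank/small-field exceptions verified individually. Both strategies merely reorganize the use of the CFSG rather than remove it, and I do not see a soft classification-free approach via orthogonality relations or Frobenius--Schur indicators. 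Producing a genuinely CFSG-free proof is precisely the open question flagged at the end of the section, and it appears to be the real difficulty here.
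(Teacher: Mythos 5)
Your argument is correct and is precisely the route the paper has in mind: the proposition is stated there only as a target for a classification-free proof, its validity being exactly the consequence of Lemma \ref{sma}(i) (Proposition 2.7 of \cite{chi}) that you spell out, since a nonabelian simple group with $m(S)=1$ would have to be a Frobenius group with a proper nontrivial normal kernel. Your closing remarks also correctly identify that the genuine difficulty is removing the CFSG, which is the open problem the paper flags rather than something it resolves.
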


    \begin{pro}
    Let $G$ be a finite group. Suppose that $G''<G'<G$. If there exists $x\in G-G''$ such that $xG''$ is a conjugacy class then $G$ is solvable.
    \end{pro}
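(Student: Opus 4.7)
The plan is to argue by induction on $|G|$ and reduce to a case handled by Thompson's theorem on Frobenius kernels. Suppose $G$ is a counterexample of minimum order: $G$ is non-solvable, $G'' < G' < G$, and $x^G = xG''$ for some $x \notin G''$. From the hypothesis one extracts the following structural skeleton: (i) $\overline{x} \in Z(G/G'')$, since every $G$-conjugate of $x$ stays in the coset $xG''$, and hence $[x,G] \subseteq G''$; (ii) $|C_G(x)| = [G:G'']$; (iii) the commutator map $g \mapsto [x,g]$ is a set-theoretic surjection $G \twoheadrightarrow G''$ whose fibres are the cosets of $C_G(x)$. These facts package into a transitive ``affine'' action of $G$ on $G''$ defined by $g\cdot y = [x,g]\,y^{g}$, with $C_G(x)$ stabilising the identity.

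The first key step is a reduction. Pick a minimal normal subgroup $N$ of $G$ with $N \leq G''$ (which exists, since $G'' \neq 1$). The affine action above descends to a transitive action of $G/N$ on $G''/N$, and this translates to $(xN)^{G/N} = (xN)(G''/N)$ in $G/N$; together with the obvious $(G/N)'' < (G/N)' < G/N$, this shows the hypothesis passes to the quotient. By minimality of $|G|$, the group $G/N$ is therefore solvable. If $N$ happens to be abelian, then $G$ is solvable as well, against the standing assumption. Hence $N$ must be a direct product of isomorphic non-abelian simple groups, $N \cong S^k$.

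The decisive step is to contradict the existence of such a non-abelian minimal normal $N$. The intended mechanism is to show that some prime-order power $x^m$ of $x$ acts on $N$ by conjugation with no nontrivial fixed points; then $N \rtimes \langle x^m\rangle$ would be a Frobenius group with kernel $N$, and Thompson's classification-free theorem on the nilpotence of Frobenius kernels would force $N$ nilpotent, contradicting $N \cong S^k$. The main obstacle is establishing this fixed-point-freeness: the hypothesis controls only the global centraliser $|C_G(x)| = [G:G'']$ and not the local $|C_N(x)|$, and one must combine the surjectivity of $g \mapsto [x,g]$ with the minimality of $N$ and its structure as a power of a non-abelian simple group, taking care when passing from $x$ to a prime-order power (since $C_N(x^m) \supseteq C_N(x)$ in general, so the passage need not preserve fixed-point-freeness). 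Producing a classification-free argument for this final step — likely hinging on the non-existence of fixed-point-free automorphisms of non-abelian simple groups — is the genuine technical challenge underlying the proposition, and is precisely what the authors are asking for.
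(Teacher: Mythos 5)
Your reduction is sound as far as it goes: the hypothesis does pass to $G/N$ for a minimal normal subgroup $N\leq G''$, so a minimal counterexample has $G/N$ solvable and $N$ a direct power of a nonabelian simple group. But the argument stops exactly where the difficulty begins, and you say so yourself: nothing in the hypothesis yields a fixed-point-free action of $x$, let alone of a prime-order power of $x$, on $N$. The data you extract control only the global centralizer, $|C_G(x)|=|G:G''|$, and this is perfectly compatible with $C_N(x)$ being large --- even with $N\leq C_G(x)$. (As an illustration of why ``class equals coset'' does not force fixed-point-freeness on a normal subgroup sitting inside that coset's difference set: in an extraspecial $2$-group $G$ with $N=G'=\zent{G}$ of order $2$ and $x$ noncentral, one has $x^G=xN$ while $N$ centralizes $x$.) Since your entire contradiction is meant to come from Thompson's theorem applied to $N\rtimes\langle x^m\rangle$, and the Frobenius configuration is never established --- and, as you note, passing from $x$ to a prime-order power can only enlarge the fixed-point set --- this is a genuine gap, not a routine omission.

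For context, the paper does not prove this proposition either: it is recorded, alongside its companion about simple groups, precisely because its only known proof goes through Theorem B of Guralnick--Navarro \cite{gn} (which is how it enters the proof of Theorem \ref{first}), and the stated goal is to find a classification-free proof. So what you have written is a reasonable opening move on that open problem --- the quotient reduction and the identification of the obstruction are correct --- but the step you defer as ``the genuine technical challenge'' is the whole content of the statement, and no proof has been given.
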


    \section{Proof of Theorem B}

    In this section, we prove Theorem B.
We start with the odd order analog of Lemma \ref{pro}.
\begin{lem}
\label{pro2}
Let $G$ be an odd order group with $\anz(G)<12/5$. If $\varphi\in\Irr(G')$ then the Wolf subgroup $U$ associated to $\varphi$ is either $G'$ or $G$.
\end{lem}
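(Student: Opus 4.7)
My plan is to prove Lemma \ref{pro2} by contradiction, adapting the proof of Lemma \ref{pro} to the odd-order setting with two key modifications: using that each nontrivial $U$-coset contains at least $f \geq 3$ (rather than $2$) conjugacy classes, and doubling the count of ``special'' vanishing characters by also taking those over $\overline{\varphi}$.

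Suppose $G' < U < G$ for a contradiction. Set $e = |G:U|$ and $f = |U:G'|$; since $|G|$ is odd, both $e$ and $f$ are odd, so $e, f \geq 3$. Each coset of $U$ has size $f|G'|$ and is a union of $f$ cosets of $G'$; since each conjugacy class of $G$ lies in a single coset of $G'$ and has size at most $|G'|$, each nontrivial $U$-coset contains at least $f$ conjugacy classes. Hence $G - U$ contains at least $f(e-1)$ conjugacy classes.

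The odd-order input is that $\overline{\varphi} \neq \varphi$: indeed, $\varphi \neq 1_{G'}$ (as $1_{G'}$ extends to $G$), and in an odd-order group the only real-valued irreducible character is the trivial one. The Wolf subgroup of $\overline{\varphi}$ coincides with that of $\varphi$, so Lemma \ref{wolf} produces $f$ irreducible characters of $G$ over $\varphi$ and $f$ more over $\overline{\varphi}$: a total of $2f$ distinct nonlinear characters (the sets are disjoint because otherwise complex conjugation would act as a fixed-point-free involution on the $f$ characters over $\varphi$, contradicting $f$ odd), all vanishing on $G - U$ and therefore each satisfying $m(\chi) \geq f(e-1)$.

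Let $\SSS$ be $\Lin(G)$ together with these $2f$ special characters and every other character of $G$ with $m(\chi) = 2$. By Lemma \ref{2}, the only integer values of $m(\chi)$ below $12/5$ are $0$ and $2$, so $\SSS$ contains every character with $m(\chi) < \anz(G)$, and Lemma \ref{ave} yields $\anz(\SSS) \leq \anz(G)$. A direct estimate using $|\Lin(G)| = ef$ and the bound on the special characters gives $\anz(\SSS) \geq 2f(e-1)/(e+2)$, which is at least $12/5$ for $e, f \geq 3$, with equality only at $e = f = 3$. This contradicts $\anz(G) < 12/5$.

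The main obstacle is the borderline case $e = f = 3$, where the inequality $2f(e-1)/(e+2) \geq 12/5$ is tight: additional $m = 2$ characters in $\SSS$ could drop $\anz(\SSS)$ below $12/5$ and break the contradiction. I would resolve this by sharpening the lower bound $m(\chi) \geq f(e-1) = 6$ on the special characters to $m(\chi) \geq 8$, using the Clifford description of $\chi|_{G'}$ as an orbit sum $\sum_{g \in G/T} \varphi^g$ which vanishes on some nontrivial class of $G'$ (via a direct sum-of-roots-of-unity calculation when $\varphi$ is linear, or via Burnside's theorem when $\varphi$ is nonlinear), together with the evenness of $m(\chi)$ from Lemma \ref{2}.
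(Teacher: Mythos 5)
Your argument follows the same route as the paper's: the same $2f$ characters lying over $\varphi$ and over $\overline{\varphi}$, the same count of at least $f(e-1)\ge 3(e-1)$ conjugacy classes in $G-U$ (the paper only uses $3(e-1)$, which suffices), and the same target ratio $\frac{6(e-1)}{e+2}\ge 12/5$ for $e\ge 3$. Your verification that the two families of $f$ characters are genuinely disjoint (via $\varphi\neq\overline{\varphi}$ for $\varphi\neq 1_{G'}$ in an odd-order group, and the fixed-point-free involution argument using that $f$ is odd) is correct and is more than the paper records explicitly; that part is a welcome addition.

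The problem is the final step. Once you enlarge $\SSS$ to include every character with $m(\chi)=2$, the claimed inequality $\anz(\SSS)\ge 2f(e-1)/(e+2)$ is false in general: each added character contributes only $2<12/5$ to the average, and nothing in the argument bounds how many such characters there are, so $\anz(\SSS)$ can be dragged arbitrarily close to $2$ for \emph{any} $e,f\ge 3$, not only in the borderline case $e=f=3$ that you single out. For the same reason your proposed repair (sharpening $m(\chi)\ge 6$ to $m(\chi)\ge 8$ on the special characters) cannot close the gap: with $t$ characters of $m=2$ present, the average of your $\SSS$ is still only about $(16f+2t)/(ef+2f+t)$, which tends to $2$ as $t$ grows. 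The paper does not add these characters at all: it applies Lemma \ref{ave} directly to $\Lin(G)$ together with the $2f$ special characters. That application is legitimate exactly when $\anz(G)\le 2$, since then every character outside the set, being nonlinear, has $m(\chi)\ge 2\ge\anz(G)$ by Lemma \ref{2}; and every invocation of Lemma \ref{pro2} in the paper is with $\anz(G)<16/11<2$. So the fix is to drop the $m=2$ characters from $\SSS$ and run the count under the hypothesis $\anz(G)\le 2$. Your instinct that the window $2<\anz(G)<12/5$ is not covered by pure counting is in fact a fair observation about the paper's own proof as well, but it is immaterial for the paper's results.
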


\begin{proof}
By way of contradiction, assume that $G'<U<G$. Write $e=|G:U|$ and $f=|U:G'|$, so that $|G:G'|=ef$.
  Then each coset $gU$ contains at least $3$ $G$-conjugacy classes (because the size of any conjugacy class cannot exceed $|G'|$). It follows that $G-U$ contains at least $3(e-1)$ $G$-conjugacy classes.  The average number of zeros  of the $f$ irreducible characters of $G$ that lie over $\varphi$, the $f$ irreducible characters that lie over $\overline{\varphi}$ and   the linear characters is at least
 $$
 \frac{2\cdot3f(e-1)}{ef+f+f}=\frac{6(e-1)}{e+2}
  \geq12/5
  $$
   for $e\geq3$. Using Lemma \ref{ave}, we have a contradiction.
  \end{proof}

  The following corollary can be deduced exactly as Corollary \ref{meta}.

  \begin{cor}
  \label{meta2}
  Let $G$ be a metabelian odd order group with $\anz(G)<12/5$. Then $G$ is a Camina group.
  \end{cor}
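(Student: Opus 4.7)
The plan is to mirror the proof of Corollary \ref{meta}, substituting Lemma \ref{pro2} for Lemma \ref{pro}. Since $G$ is metabelian, $G'$ is abelian, and so every $\varphi \in \Irr(G')$ is linear. For $\varphi \neq 1_{G'}$, I would first observe that $\varphi$ cannot extend to $G$: any extension would be an irreducible character of $G$ of degree $1$, i.e.\ a linear character of $G$, which is forced to be trivial on $G'$, contradicting $\varphi \neq 1_{G'}$. Consequently, the Wolf subgroup $U$ associated to $\varphi$ is a proper subgroup of $G$ containing $G'$.

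Next I would invoke Lemma \ref{pro2}, whose hypotheses ($G$ of odd order, $\anz(G) < 12/5$) match ours exactly, to conclude that $U$ must be either $G'$ or $G$. Having ruled out $U = G$ in the previous step, we are left with $U = G'$. This means that every irreducible character of $G$ lying over any nontrivial $\varphi \in \Irr(G')$ vanishes on $G - G'$. Since the only irreducible characters of $G$ lying over $1_{G'}$ are the linear characters of $G$, it follows that every nonlinear $\chi \in \Irr(G)$ vanishes on the whole of $G - G'$.

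The conclusion is then immediate from Lemma \ref{char}, which characterizes Camina groups precisely by this vanishing condition, and so $G$ is a Camina group. Note that, unlike Corollary \ref{meta}, no exceptional groups such as $D_{12}$ or $C_3:C_4$ need to be excluded here, because those groups have even order and are automatically ruled out by our standing hypothesis that $|G|$ is odd. There is no genuine obstacle in this argument; the only point worth checking carefully is that the threshold $12/5$ appearing in Lemma \ref{pro2} is exactly the one we have, so the application is clean and no residual case analysis is required.
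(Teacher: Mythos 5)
Your proof is correct and is exactly the argument the paper intends: it mirrors the proof of Corollary \ref{meta}, replacing Lemma \ref{pro} by Lemma \ref{pro2} and noting that the exceptional groups $D_{12}$ and $C_3:C_4$ are excluded automatically by the odd-order hypothesis. The paper itself only remarks that the corollary ``can be deduced exactly as Corollary \ref{meta}'', and your write-up supplies precisely that deduction.
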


  Next, we deduce the metabelian case of Theorem B.

  \begin{thm}
  \label{b}
  Let $G$ be an odd order metabelian group. If $G$ is not abelian and $\anz(G)<16/11$ then $G$ is a Frobenius group of order $3\cdot7, 3\cdot13, 3\cdot19$ or $5\cdot11$.
  \end{thm}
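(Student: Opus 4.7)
The plan is to combine the Camina reduction already available with the Dark--Scoppola classification, and then finish by a small counting argument on Frobenius groups. First I would invoke Corollary \ref{meta2} (applicable because $16/11<12/5$) to conclude that $G$ is a Camina group. Applying Theorem \ref{ds}: since $|G|$ is odd, the quaternion-complement case drops out, and Lemma \ref{pgp} rules out the $p$-group case for odd $p$. This leaves $G=K\rtimes C$ with $C$ cyclic of order $d$; since $G$ is metabelian and $G'=K$ (a standard fact for Frobenius groups with cyclic complement), $K$ is abelian.

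Next I would exploit the explicit character theory of such Frobenius groups. Writing $k:=(|K|-1)/d$, the group $G$ has exactly $d$ linear characters and $k$ nonlinear irreducible characters (each of degree $d$, obtained by inducing from a nontrivial character of $K$), and every nonlinear $\chi\in\Irr(G)$ vanishes on the $d-1$ conjugacy classes in $G\setminus K$. Therefore
$$\anz(G)\geq\frac{k(d-1)}{d+k},$$
and combining with the hypothesis $\anz(G)<16/11$ gives $k(11d-27)<16d$, hence
$$k<\frac{16d}{11d-27}\qquad\text{whenever }d\geq 3.$$

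The finish is an enumeration. For $d=3$, the bound yields $k\leq 7$; the oddness of $|K|=kd+1$ forces $k$ even, so $k\in\{2,4,6\}$, giving the Frobenius groups of orders $21,39,57$ with kernels $C_7, C_{13}, C_{19}$. For $d=5$, the bound with $k$ even forces $k=2$, giving $|G|=55$. For $d=7$, one gets $k=2$ and $|K|=15$, which is excluded because $7\nmid|\Aut(C_{15})|=8$. For $d\geq 9$, the bound gives $k\leq 1$, forcing $|K|=d+1$ to be even, which contradicts $|G|$ odd. Finally, the same formula for $\anz(G)$ confirms $\anz(G)<16/11$ for each of the four candidate groups.

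The main obstacle is really only the case-by-case bookkeeping at the end: for each admissible $(d,k)$, verifying that $|K|=kd+1$ is odd and that $C_d$ admits a fixed-point-free action on a group of order $kd+1$ (equivalently, that $d$ divides $\varphi(kd+1)$ for the prime candidates that survive). Once Corollary \ref{meta2}, Theorem \ref{ds}, and Lemma \ref{pgp} are in hand there is no further conceptual step.
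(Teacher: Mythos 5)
Your proposal is correct and follows essentially the same route as the paper: reduce to a Camina group via Corollary \ref{meta2}, apply Theorem \ref{ds} and Lemma \ref{pgp} to land in the Frobenius-with-cyclic-complement case, and then count the $k=(|K|-1)/d$ nonlinear characters vanishing on the $d-1$ classes outside $K$. The only cosmetic difference is that you derive a single uniform bound $k<16d/(11d-27)$ where the paper treats $d=3$, $d=5$ and $d\geq 7$ separately, and both arguments finish with the same parity observation on $k$ and the same exclusion of $d=7$ via the impossibility of a fixed-point-free action of $C_7$ on a group of order $15$.
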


  \begin{proof}
  By Corollary \ref{meta2}, we know that $G$ is a Camina group. Again, we use Theorem \ref{ds}.

  If $G$ is a $p$-group, then we reach a contradiction using Lemma \ref{pgp}. Now, we may assume that $G$ is a Frobenius group with cyclic complement $C$ and kernel $K$. Write $|C|=n$. Since $|G|$ is odd, $G$ has at least two nonlinear characters. Each of them vanishes on the $n-1$ conjugacy classes that are contained in $G-K$.

  Assume first that $n=3$. Let $k$ be the number of nonlinear irreducible characters of $G$. The average number of zeros of these $k$ nonlinear irreducible characters and the $3$ linear characters is at least $2k/(k+3)\geq16/11$ if $k>6$. Hence, $G$ has $2,4$ or $6$ nonlinear irreducible characters. Therefore, $|K/\Phi(K)|=7, 13$ or $19$, respectively. In particular, $K$ is cyclic and it is easy to deduce that $\Phi(K)=1$. Hence, $G$ is a Frobenius group of order $3\cdot7, 3\cdot13$ or $3\cdot19$.

  Now, assume that $n=5$. Arguing as before, we can see that the number of nonlinear irreducible characters of $G$ is $2$. Hence, $G$ is the Frobenius group of order $5\cdot11$.

Finally, assume that $n\geq7$ and we want to reach a contradiction. The group $G$ has at least $2$ nonlinear characters that vanish on at least the $n-1$ conjugacy classes of $G$ that are contained in $G-K$. Hence, the average number of zeros of these two nonlinear irreducible characters and the $n$ linear characters is at least $2(n-1)//n+2)$. By Lemma \ref{ave}, $2(n-1)/(n+2)<16/11$ so $n<9$. Therefore, $n=7$. But then we can see that the action of $C$ on $\Irr(K)$ cannot have two nontrivial orbits. This completes the proof.
\end{proof}

Now, we are ready to complete the proof of Theorem B. We want to see that if $G$ is an odd order group with $\anz(G)<16/11$ then $G$ is one of the groups that appear in Theorem \ref{b}.

\begin{proof}[Proof of Theorem B]
Let $G$ be a minimal non-metabelian odd order (solvable) group with $\anz(G)<16/11$.  By Lemma \ref{quo}, $G''$ is a minimal normal subgroup of $G$. By Lemma \ref{quo} and Theorem \ref{b}, $G/G''$ is a Frobenius group of order $3\cdot7, 3\cdot13, 3\cdot19$ or $5\cdot11$. It follows that $G''$ is the Fitting subgroup of $G$ and the action of $G'/G''$ on $G''$ is Frobenius. Write $d=|G:G'|$.

Let $\varphi\in\Irr(G')$ be nonlinear. By Lemma \ref{pro2}, the Wolf subgroup associated to $\varphi$ is $G'$ or $G$. Suppose first that it is $G$. Therefore, $\varphi$ extends to $\chi\in\Irr(G)$ and $\chi$ is not identically zero on $G-G'$. By Lemma 3.1 of \cite{lmw}, $G$ is not a Camina group. Hence, there exists $g\in G-G'$ such that the conjugacy class of $G$ has size $|g^G|<|G'|$. Since $|G|$ is odd, $|g^G|\leq|G|/3$. The same holds for $|(g^i)^G|$ for $1\leq i<d$. Therefore, there are at least $3(d-1)$ conjugacy classes in $G-G'$. Recall that the (at least two) nonlinear irreducible characters of $G/G''$ vanish on $G-G'$. Hence, by Lemma \ref{ave}
$$
\frac{16}{11}>\anz(G)\geq\frac{6(d-1)}{d+2}
$$
and we have a contradiction since $d\geq3$.

It follows that the Wolf subgroup associated to any nonlinear irreducible character of $G'$ is $G'$. In other words, any character in $\Irr(G|G'')$ vanishes on $G-G''$.

Suppose first that $d=3$.
 Note that the number of conjugacy classes in $G-G''$ is at least $4$. The average number of  two characters in $\Irr(G|G'')$ and the linear characters of $G$ is at least
$2\cdot4/5>16/11$. This contradicts Lemma \ref{ave}. Arguing similarly, we algo get a contradiction when $d=5$. This contradiction implies that $G$ is metabelian. The result now follows from Theorem \ref{b}.

Conversely, it is routine to see that if $G$ is one of the $4$ Frobenius groups in the statement, then $\anz(G)<16/11$.
\end{proof}

\section{Open questions}

Theorem A suggests that, perhaps, the derived length of a solvable group $G$ is bounded above in terms of $\anz(G)$. Note however that this would be a strong form of the first part of Conjecture F of \cite{ms}. We propose a more modest problem.

\begin{que}
\label{fit} Is it true that there exists a real-valued function $f$ such that for every solvable group $G$ the Fitting height of $G$ is $h(G)\leq f(\anz(G))$?
\end{que}

It is definitely false that the second part of Conjecture F of \cite{ms} admits a strong version replacing $m(G)$ by $\anz(G)$: consider the Frobenius groups of order $(p^n-1)p^n$. Recall that for these groups the average number of zeros is less than $1$, while the index of the Fitting subgroup is arbitrarily large. Nevertheless, it could be true that if $\Irr_1(G)=\Irr(G)-\Lin(G)$ then $|G:F(G)|$ is bounded above in terms of $\anz(\Irr_1(G))$, which would be a strong form of Conjecture 3.2 of \cite{mor} (the analogous strong form of the second part of that conjecture could perhaps be true too). Again, we will propose a more modest goal.

\begin{que}
Is it true that there exists a real-valued function $f$ such that for every nonabelian $p$-group $G$ we have $|G|\leq f(\anz(G))$?
\end{que}

Note that if we replace $\anz(G)$ by $m(G)$ in the previous question, then the answer is affirmative by Theorem B of \cite{ms}.

We conclude with a much stronger form of Question \ref{fit}, motivated by the result in \cite{lm} mentioned in the Introduction.

\begin{que}
\label{3}
Let $\{G_n\}$ be a numerable family of solvable groups with Fitting height going to infinity when $n\to\infty$. Is it true that $\Sigma'(G_n)\to1$ when $n\to\infty$?
\end{que}

\end{document}